\tikzset{liltext/.style={font=\tiny}}
\newtheorem{definition}{Definition}[section]
\newtheorem{theorem}{Theorem}[section]
\newtheorem{lemma}{Lemma}[section]
\newtheorem{remark}{Remark}[section]
\newtheorem{example}{Example}[section]
\newcommand{\R}{\mathbb{R}}
\newcommand{\Z}{\mathbb{Z}}
\newcommand{\N}{\mathbb{N}}
\newcommand{\e}{\varepsilon}
\newcommand{\ity}{\infty}
\newcommand{\dps}{\displaystyle}
\newcommand{\f}{\displaystyle\frac}
\begin{document}
\title[Nonlinear damped wave equations on weighted graphs]{Blow up results and lifespan estimates for nonlinear damped wave equations on weighted graphs}
\subjclass{05C12, 35B44, 35L15, 35R02}
\keywords{PDEs on graphs; Damped wave equations; Distance in graphs; Blow-up solution; Lifespan}
\thanks{\textit{Corresponding author:} Tuan Anh Dao}

\maketitle
\centerline{\scshape \textbf{Tuan Anh Dao}}
{\footnotesize
	\centerline{Faculty of Mathematics and Informatics, Hanoi University of Science and Technology}
	\centerline{No.1 Dai Co Viet road, Hanoi, Vietnam}
	\centerline{Email: anh.daotuan@hust.edu.vn}}
\medskip

\centerline{\scshape \textbf{Anh Tuan Duong}}
{\footnotesize
	\centerline{Faculty of Mathematics and Informatics, Hanoi University of Science and Technology}
	\centerline{No.1 Dai Co Viet road, Hanoi, Vietnam}
	\centerline{Email: tuan.duonganh@hust.edu.vn}}

\begin{abstract}
 In this article, we are interested in studying the Cauchy problems for nonlinear damped wave equations and their systems on a weighted graph. Our main purpose is two-fold, namely, under certain conditions for volume growth of a ball and the initial data we would like to not only prove nonexistence of global (in time) weak solutions but also indicate lifespan estimates for local (in time) weak solutions when a blow-up phenomenon in finite time occurs. Throughout the present paper, we will partially give a positive answer for the optimality of our results by an application to the $n$-dimensional integer lattice graph $\Z^n$ to recover the well-known results in the Euclidean setting.
\end{abstract}

\tableofcontents

\section{Introduction}
In this paper, let us consider the following Cauchy problems for nonlinear damped wave equations on a weighted graph $G=(V,E)$:
\begin{equation}
\begin{cases}
u_{tt}(t,x)- \Delta u(t,x)+ u_t(t,x)= |u(t,x)|^p, &(t,x) \in (0,T) \times V, \\
u(0,x)= \e u_0(x),\quad u_t(0,x)= \e u_1(x), &\quad x\in V,
\end{cases}
\label{Equation_Main}
\end{equation}
and
\begin{equation}
\begin{cases}
u_{tt}(t,x)- \Delta u(t,x)+ u_t(t,x)= |v(t,x)|^p, &(t,x) \in (0,T) \times V, \\
v_{tt}(t,x)- \Delta v(t,x)+ v_t(t,x)= |u(t,x)|^q, &(t,x) \in (0,T) \times V, \\
u(0,x)= \e u_0(x),\quad u_t(0,x)= \e u_1(x), &\quad x\in V, \\
v(0,x)= \e v_0(x),\quad v_t(0,x)= \e v_1(x), &\quad x\in V,
\end{cases}
\label{System_Main}
\end{equation}
where $p,q>0$ are called power exponents, $T>0$ and $V,E$ are sets of vertices, edges of a graph $G$, respectively. The operator $\Delta$ stands for the Laplacian on $V$. The functions $u,v:(0,T)\times V \rightarrow \R$ denote unknown functions to the problem (\ref{System_Main}). Moreover, the constant $\varepsilon>0$ presents the size of the initial data and the given functions $(u_0,u_1,v_0,v_1)$ represent the shape of the initial data.

To get started, let us sketch out several historical background related to our models \eqref{Equation_Main} and \eqref{System_Main} in the Euclidean space $\R^n$ by considering the following Cauchy problems for the nonlinear classical damped wave equation:
\begin{align} \label{IntrDampedWaveSemilin}
\begin{cases}
u_{tt}(t,x)-\Delta u(t,x)+u_t(t,x)=|u(t,x)|^p, & (t,x)\in (0,T)\times \R^n, \\
u(0,x)= \e u_0(x), \quad u_t(0,x)= \e u_1(x), & x\in\R^n,
\end{cases}
\end{align}
and
\begin{align}\label{Eq_Coupled_Damped_Waves}
\begin{cases}
u_{tt}(t,x)-\Delta u(t,x)+u_t(t,x)=|v(t,x)|^p, &(t,x)\in (0,T)\times \R^n,\\
v_{tt}(t,x)-\Delta v(t,x)+v_t(t,x)=|u(t,x)|^q, &(t,x)\in (0,T)\times \R^n,\\
u(0,x)= \e u_0(x),\quad u_t(0,x)= \e u_1(x), &\quad x\in \R^n, \\
v(0,x)= \e v_0(x),\quad v_t(0,x)= \e v_1(x), &\quad x\in \R^n,
\end{cases}
\end{align}
where $p,q>1$. First of all, thanks to the presence of the damping term $u_t$, some decay estimates for solutions to the corresponding linear equation of \eqref{IntrDampedWaveSemilin} were derived in the two pioneering papers \cite{Matsumura76,Matsumura77} since 1976 and the asymptotic behavior of solutions to \eqref{IntrDampedWaveSemilin} was also established there simultaneously. Afterwards, we recognize the critical exponent of \eqref{IntrDampedWaveSemilin} is
$$p_{\text{Fuj}}(n):=1+\frac{2}{n}, $$
the so-called Fujita exponent coming from the corresponding semi-linear heat equations (see \cite{Fujita-1966} and references therein). Namely, by assuming compactly supported small data the authors in \cite{TodorovaYordanov2001} proved a result for global (in time) solution existence to \eqref{IntrDampedWaveSemilin}  in the case $p>p_{\text{Fuj}}(n)$ and $p\leq n/(n-2)$ when $n\geq3$. Under some integral sign conditions for the initial data, a blow-up result for local (in time) solutions was demonstrated when $1<p<p_{\text{Fuj}}(n)$. Especially, the author in \cite{Zhang2001} applied the so-called test function method, which was originally developed in \cite{BarasPierre}, to indicate that the critical case $p=p_{\text{Fuj}}(n)$ belongs to the blow-up range. In particular, this method is based on a contradiction argument and finds out sharp exponents for parabolic like models. Among other things, sharp estimates for the maximal existence time when local solutions blow up in finite time, which is also called the lifespan of solutions to \eqref{IntrDampedWaveSemilin}, have been studied in several papers \cite{Kira-Qafs-2002,IkedaOgawa2016,Iked-Waka-2015,Lai-Zhou-2019} as follows:
\begin{align} \label{Lifespan.Eq}
	{\rm LifeSpan}(u)\sim \begin{cases}
		C\varepsilon^{-\frac{2(p-1)}{2-n(p-1)}}&\mbox{if}\ \ 1<p<p_{\mathrm{Fuj}}(n),\\
		\exp\left(C\varepsilon^{-(p-1)}\right)&\mbox{if}\ \ p=p_{\mathrm{Fuj}}(n),
	\end{cases}
\end{align}
with a positive constant $C=C(n,p,u_0,u_1)$ independent of $\varepsilon$. From these observations, it seems to be complete in terms of investigating the problem \eqref{IntrDampedWaveSemilin} in 2019. Secondly, turning to the following Cauchy problem for a weakly coupled system of nonlinear damped wave equations \eqref{Eq_Coupled_Damped_Waves} one realizes that the critical condition \begin{align}\label{Critical_Condition}
\Gamma(p,q):=\frac{\max\{p,q\}+1}{pq-1}=\frac{n}{2},
\end{align}
becomes the critical curve in the $p-q$ plane, which can be described by the following figure:

\begin{figure}[http]
	\centering
	\begin{tikzpicture}
	\fill[domain=2:4.8,color=black!10!white] plot[smooth, tension=.7] coordinates {(1.2,5.3) (1.7,3.6) (2.5,2.5)}--(1.2,5.3) -- (1,5.3) -- (1,1) -- (2.5,2.5)--cycle;
	\fill[domain=2:4.8,color=black!10!white] plot[smooth, tension=.7] coordinates {(2.5,2.5) (3.6,1.7) (5.3,1.2)}--(5.3,1.2) -- (5.3,1) -- (1,1) -- (2.5,2.5)--cycle;
	\fill[domain=2:4.8,color=black!20!white] plot[smooth, tension=.7] coordinates {(1.2,5.3) (1.7,3.6) (2.5,2.5)}--(2.5,2.5) -- (5.3,5.3) -- (1.2,5.3)--cycle;
	\fill[domain=2:4.8,color=black!20!white] plot[smooth, tension=.7] coordinates {(2.5,2.5) (3.6,1.7) (5.3,1.2)}--(5.3,1.2) -- (5.3,5.3) -- (2.5,2.5)--cycle;

	\draw[->] (-0.2,0) -- (5.8,0) node[below] {$p$};
	\draw[->] (0,-0.2) -- (0,5.4) node[left] {$q$};
	\draw[dashed, color=black]  (0, 0)--(5.3,5.3);
	\node[right] at (3.3,3.3) {{$\leftarrow$ $p=q$}};
	\node[left] at (0,-0.2) {{$0$}};
	\draw[fill] (1,0) circle[radius=1pt];
	\draw[fill] (2.5,2.5) circle[radius=1pt];
	\node[below] at (1,0) {{$1$}};
	\draw[fill] (0,1) circle[radius=1pt];
	\node[left] at (0,1) {{$1$}};
	\draw[fill] (2.5,0) circle[radius=1pt];
	\node[below] at (2.5,0) {{$p_{\mathrm{Fuj}}(n)$}};
	\draw[fill] (0,2.5) circle[radius=1pt];
	\node[left] at (0,2.5) {{$p_{\mathrm{Fuj}}(n)$}};
	\node[left] at (4.1,4.5) {{$\leftarrow$ $\Gamma(p,q)=\frac{n}{2}$}};
	\draw[dashed, color=black]  (0, 1)--(5.6, 1);
	\draw[dashed, color=black]  (1, 0)--(1, 5.4);
	\draw[dashed, color=black] (2.5,0)--(2.5,2.5);
	\draw[dashed, color=black] (0,2.5)--(2.5,2.5);
	\draw[color=black] plot[smooth, tension=.7] coordinates {(1.2,5.3) (1.7,3.6) (2.5,2.5)};
	\draw[color=black] plot[smooth, tension=.7] coordinates {(2.5,2.5) (3.6,1.7) (5.3,1.2)};
	\fill[color=black!20!white] (6,4.4)--(6.4,4.4)--(6.4,4)--(6,4)--cycle;
    \node[right] at (6.5,4.2) {{Global (in time) solution}};
    \fill[color=black!10!white] (6,3.8)--(6.4,3.8)--(6.4,3.4)--(6,3.4)--cycle;
	\node[right] at (6.5,3.6) {{Blow-up in finite time}};
	\end{tikzpicture}
	\caption{The critical curve for the system \eqref{Eq_Coupled_Damped_Waves} in the $p-q$ plane}
	\label{imggg}
\end{figure}
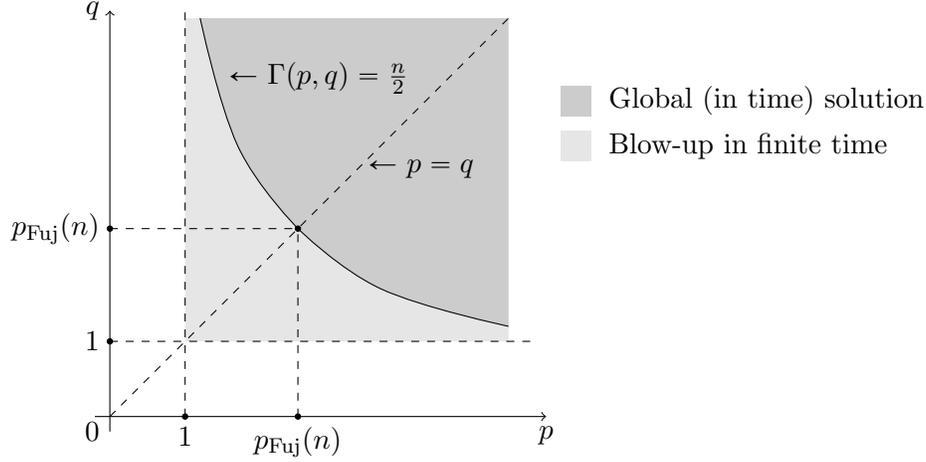
\noindent In other words, a global (in time) small data Sobolev solution exists uniquely when $\Gamma(p,q)<n/2$, meanwhile, every non-trivial local (in time) weak solution, in general, blows up in finite time when $\Gamma(p,q)\geqslant n/2$. Namely, the authors in \cite{Sun-Wang-2007,Nara-2009,Takeda-2009} determined the critical condition \eqref{Critical_Condition} in the low dimensional cases $n=1,2,3$. Later in \cite{Nish-Waka-2014,Nish-Waka-2015}, the critical condition \eqref{Critical_Condition} was claimed for any spatial dimensions $n\geqslant 1$, and moreover, the lifespan of solutions were estimated by
\begin{align*}
	C_1\varepsilon^{-\frac{1}{\Gamma(p,q)-n/2}+\epsilon_0}\le {\rm LifeSpan}(u,v) \le C_2\varepsilon^{-\frac{1}{\Gamma(p,q)-n/2}}
\end{align*}
where $C_k= C_k(n,p,q,u_0,u_1,v_0,v_1)$ with $k=1,2$ are positive constants independent of $\varepsilon$ and $\epsilon_0>0$ a small number, in the subcritical case $\Gamma(p,q)>n/2$ only. However, there is a gap between lifespan bounds from the below to the above to conclude that such estimates in \cite{Nish-Waka-2014,Nish-Waka-2015} are not really optimal. This lack has been filled in the quite recent paper \cite{ChenDao2021} not only in the subcritical case $\Gamma(p,q)>n/2$ but also in the critical case $\Gamma(p,q)=n/2$. More precisely, the authors constructed two test functions with suitable scaling associated with dealing with a system of two nonlinear differential inequalities to derive upper bound estimates for the lifespan in the critical case. On the other hand, they introduced suitable Sobolev spaces with their corresponding norms carrying suitable polynomial-logarithmic type time-dependent weighted functions in terms of obtaining lower bound estimates for the lifespan in both the subcritical and critical cases. Hence, the sharp estimates for the lifespan are established so far as follows:
	\begin{align}\label{Lifespan.Sys}
		{\rm LifeSpan}(u,v) \sim \begin{cases}
			C\varepsilon^{-\frac{1}{\Gamma(p,q)-n/2}} &\text{ if }\, \Gamma(p,q)>n/2, \\
			\mathrm{exp}\left(C\e^{-(p-1)}\right) &\text{ if }\Gamma(p,q)=n/2,\ \ p=q, \\
			\mathrm{exp}\left(C\e^{-\max\left\{\frac{p(pq-1)}{p+1},\frac{q(pq-1)}{q+1}\right\}}\right) &\text{ if }\Gamma(p,q)=n/2,\ \ p\neq q.
		\end{cases}
	\end{align}

Coming back to our interest in this paper we can say that the Cauchy problem for partial differential equations on graphs has achieved a great attention from many authors during the last decades. To be specific, we would like to mention important contributions following this direction in the typical monographs \cite{Grigoryan-2018,KellerLenzWojciechowski-2021,Mugnolo-2016} and several previous works related to elliptic equations/inequalities or their corresponding systems \cite{HuaJost-2014,HuaKeller-2014,MonticelliPunzoSomaglia-2023,GuHuangSun-2023,ImbesiBisci-2023,NguyenDaoDuong-20251,{NguyenDaoDuong-20252}} in which some existence and nonexistence results were established under some assumptions on weighted volume growth of balls and a suitable distance. Also, the setting of parabolic equations on graphs has widely studied in numerous papers (see, for instance, \cite{BarlowCoulhonGrigoryan-2001,HuaMugnolo-2015,Huang-2012,LenzSchmidtZimmermann-2023,Meglioli-2025} and the references therein) to investigate the uniqueness of solutions to a heat equation class. Recently, thanks to heat kernel estimates, the authors in \cite{LinWu-2017,Wu-2018,Wu-2021,LinWu-2018} succeeded in proving both existence and nonexistence of nonnegative global solutions for semilinear heat equations by assuming a curvature dimension condition. Among other things, some estimates and the asymptotic behavior of the lifespan of solutions to a semilinear heat equation are indicated in \cite{HuWang-2024} by developing the first eigenvalue method of Kaplan combined with the
discrete principle of Phragm\'{e}n-Lindel\"{o}f and the approach of upper/lower solutions on graphs. Quite recently, the authors in a series of their research \cite{MonticelliPunzoSomaglia-2024,GrilloMeglioliPunzo-2024,PunzoSacco-2025} have proved the existence of global solutions based on a fixed-point argument with some kind of different nonlinearities. On the other hand, blow-up results for local solutions have been demonstrated in \cite{GrilloMeglioliPunzo-2024,PunzoSacco-2025} by the aid of the contradiction principle, or in \cite{MonticelliPunzoSomaglia-2024} by using estimates for the Laplacian of the distance and conditions for suitable weighted space-time volume growth as well. Then, the authors in \cite{MonticelliPunzoSomaglia-2025} have continued to apply such approaches for the blow-up part to the wave equation on weighted graphs with a nonlinear term of positive potential type. To the best of authors' knowledge, there is no paper in terms of the study of the Cauchy problems for nonlinear damped wave equations on a weighted graph so far. The main motivation is strongly inspired by the recent papers \cite{MonticelliPunzoSomaglia-2024,MonticelliPunzoSomaglia-2025} to prove results for blow-up in finite time of solutions and give upper bound estimates for the lifespan of solutions to \eqref{Equation_Main} and \eqref{System_Main} simultaneously. For the blow-up situation, we are going to utilize the refined test function method combined with the same techniques as in \cite{MonticelliPunzoSomaglia-2024,MonticelliPunzoSomaglia-2025} to achieve point-wise estimates for the Laplacian of a judicious distance. The point worth noticing is that the proof of lifespan estimates for solutions relies on a novel technique which has never appeared in previous literature even not a simple generalization of the method used in \cite{HuWang-2024}. More precisely, constructing refined test functions with the help of some parameter-dependent auxiliary functionals plays an essential role in catching our lifespan estimates. Finally, the optimality of our results will be also discussed in this paper by recovering both the Fujita exponent and the critical curve as in the Euclidean space when we apply them to the $n$-dimensional integer lattice graph $\Z^n$.

\subsection{Notations} 
We give the following notations which are used throughout this paper.
\begin{itemize}[leftmargin=*]
\item We call a weighted graph as the triple $(V,\omega,\mu)$ defined by $\mu: V \to [0,\ity)$ and $\omega: V\times V \to [0,\ity)$ be two given functions fulfilling the following conditions:
\begin{itemize}
    \item[i)] \textit{Symmetric property}: $\omega(x,y) = \omega(y,x)$ for any $(x,y)\in V\times V$,
    \item[ii)] \textit{Zero diagonal}: $\omega(x,x) = 0$ for any $x\in V$,
    \item[iii)] \textit{Finite sum}: $\dps\sum_{y\in V}\omega(x,y) < \ity$ for any $x \in V$.
\end{itemize}
Sometimes, $\mu$ and $\omega$ are the so-called edge weight and node measure, respectively.
\item We say that a weighted graph $(V,\omega,\mu)$ is
\begin{itemize}
    \item[i)] \textit{locally finite} if each vertex $x$ has only finitely many $y\in V$ such that $y\sim x$ (we call that $y$ is connected to $x$, or a couple $(x,y)$ is an edge in $E$), i.e. $\omega(x,y)>0$.
    \item[ii)] \textit{connected} if for any two distinct vertices $x$ and $y$, there exists a path connecting $x$ and $y$, which is understood to be a collection of vertices $\{x_k\}_{k=0}^n$ for some $n\in\N$ such that $x_0=x$, $x_n=y$ and $x_k\sim x_{k+1}$ for any $k=0,1,\cdots,n-1$.
    \item[iii)] \textit{undirected} if the edge connecting $x$ and $y$ is the same as the edge connecting $y$ and $x$ for any $x,y \in V$.
\end{itemize}
\item We define the distance function $d: V\times V \to [0,\ity)$ which is symmetric, zero diagonal and satisfies
$$ d(x,y) \le d(x,z)+ d(z,y) \quad \text{ for any }x,y,z\in V. $$
\item For any real function $f:V \to \R$, the Laplace operator $\Delta f$ is defined by
$$ \Delta f(x)= \f{1}{\mu(x)}\sum_{y\in V,\, y\sim x}\omega(x,y)\big(f(y)-f(x)\big) \quad \text{ for any } x\in V. $$
\item We denote by $\mathcal{C}(V)$, the set of real functions on $V$. Moreover, we introduce the spaces as follows:
\begin{align*}
    W^{1,p}(V) &= \left\{f \in \mathcal{C}(V): \| f\|_{W^{1,p}(V)} := \left(\sum_{x\in V}\mu(x)|\big(\nabla f(x)|^p +|f(x)|^p\big)\right)^{\frac{1}{p}}<\infty\right\}, \\
    L^p(V) &= \left\{f \in \mathcal{C}(V): \| f\|_{L^p(V)} := \left(\sum_{x\in V}\mu(x)|f(x)|^p\right)^{\frac{1}{p}}<\infty\right\}, \\
    L^\ity(V) &= \left\{f \in \mathcal{C}(V): \| f\|_{L^\ity(V)} := \sup_{x\in V} |f(x)| <\infty\right\},
\end{align*}
where $1\le p<\ity$.
\item For $r \ge 0$, $B(x,r):= \{y\in V: d(x,y) \le r\}$ stands for a ball centered at $x$ with radius $r$. Moreover, for any given set $\Omega\subset V$, the volume of $\Omega$ is given by
$$ {\rm Vol}(\Omega) =\sum_{x\in\Omega} \mu(x). $$
\item We write $f\lesssim g$ when there exists a constant $C>0$ such that $f\le Cg$, and $f \sim g$ when $g\lesssim f\lesssim g$. Hereafter, $C$ is a suitable positive constant which may have different value from line to line.
\end{itemize}

\begin{lemma} \label{Inte-by-parts.Lemma}
    By the definition of Laplace operator above, it is clear to verify the following formula of integration by parts (see more \cite{Grigoryan-2018}):
    $$ \sum_{x\in V} \big(\Delta f(x)\big)g(x)\mu(x)= \sum_{x\in V} f(x)\big(\Delta g(x)\big)\mu(x). $$
\end{lemma}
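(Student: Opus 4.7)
The plan is to substitute the definition of the Laplacian into the left-hand side and rearrange the resulting double sum via the symmetry of the edge weight $\omega$. I would first write
\begin{equation*}
\sum_{x\in V} \big(\Delta f(x)\big)g(x)\mu(x) = \sum_{x\in V} g(x)\sum_{y\in V,\, y\sim x}\omega(x,y)\big(f(y)-f(x)\big),
\end{equation*}
where the node measure $\mu(x)$ cancels against the $1/\mu(x)$ coming from the Laplacian. Since $\omega(x,y)>0$ is equivalent to $y\sim x$ and $\omega(x,y)=0$ otherwise, the inner sum may be freely extended to an unrestricted sum over $y\in V$ without changing its value.

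Next, I would split the expression into two double sums, apply Fubini/Tonelli to interchange the order of summation in the term involving $f(y)g(x)$, and then exploit the symmetry $\omega(x,y)=\omega(y,x)$ after relabelling $x\leftrightarrow y$. This rearrangement leads to
\begin{equation*}
\sum_{x\in V}\sum_{y\in V}\omega(x,y)f(x)g(y)-\sum_{x\in V}\sum_{y\in V}\omega(x,y)f(x)g(x) = \sum_{x\in V} f(x)\sum_{y\in V,\, y\sim x}\omega(x,y)\big(g(y)-g(x)\big),
\end{equation*}
which, upon multiplying and dividing the inner sum by $\mu(x)$, is precisely $\sum_{x\in V} f(x)\big(\Delta g(x)\big)\mu(x)$. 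Thus the identity is established by a short symmetry computation.

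The only delicate point is the interchange of summation order, which requires that the double series $\sum_{x,y\in V}\omega(x,y)|f(x)||g(y)|$ be absolutely convergent. This is the tacit integrability hypothesis underlying the statement; in every application to follow it will be guaranteed automatically, because at least one of $f,g$ is taken to be a compactly supported test function so that only finitely many nonzero terms appear. Consequently I do not anticipate any real obstacle: the result is the discrete counterpart of Green's formula, and the proof reduces to the symmetry of $\omega$ combined with a legitimate Fubini swap.
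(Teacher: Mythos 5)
Your proof is correct: the paper itself offers no argument for this lemma (it simply declares it clear and cites Grigor'yan), and the symmetrization computation you give --- cancel $\mu(x)$, split the double sum, swap the order of summation, and relabel $x\leftrightarrow y$ using $\omega(x,y)=\omega(y,x)$ --- is exactly the standard discrete Green's formula argument the citation points to. You are also right to flag absolute convergence as the only hypothesis needed for the Fubini step, and correct that it holds in all uses here since the test functions are compactly supported and the graph is locally finite.
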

    
\subsection{Main results}
First of all, in order to state our main result let us give the definitions of weak solutions to \eqref{Equation_Main} and \eqref{System_Main}.

\begin{definition}\label{Defn1_Weak}
    The function $u= u(t,x)$ is called a local (in time) weak solution to the Cauchy problem \eqref{Equation_Main} on $[0,T)$ with $T>0$, if $u\in L_{\rm loc}^p\big([0,T)\times V\big)$ satisfies the following integral equality:
\begin{align}
&\int_0^T \sum_{x\in V} \mu(x) u(t,x) \partial_t^2\Psi(t,x)dt - \int_0^T \sum_{x\in V} \mu(x)\Delta u(t,x) \Psi(t,x)dt \notag \\
&\hspace{5cm}- \int_0^T \sum_{x\in V} \mu(x) u(t,x)\partial_t\Psi(t,x)dt \notag\\
&\qquad=\int_0^T \sum_{x\in V} \mu(x)\Psi(t,x)|u(t,x)|^p dt+ \varepsilon \sum_{x\in V} \mu(x) \big((u_0(x)+u_1(x)\big)\Psi(0,x) \notag \\
&\hspace{5cm}- \varepsilon \sum_{x\in V} \mu(x)u_0(x)\partial_t\Psi(0,x), \label{Integral_00}
\end{align}
for any $\Psi\in\mathcal{C}_0^{\infty}\big([0,T)\times V\big)$. If $T= \ity$, we say that the function $u$ is a global (in time) weak solution to the Cauchy problem \eqref{Equation_Main}.
\end{definition}

\begin{definition}\label{Defn2_Weak}
    The pair of functions $(u,v)= (u(t,x),v(t,x))$ is called a local (in time) weak solution to the Cauchy problem \eqref{System_Main} on $[0,T)$ with $T>0$, if 
    $$ (u,v)\in L_{\rm loc}^q\big([0,T)\times V\big)\times L_{\rm loc}^p\big([0,T)\times V\big) $$
    satisfies the following integral equalities:
\begin{align} \label{Integral_01}
&\int_0^T \sum_{x\in V} \mu(x) u(t,x) \partial_t^2\Psi_1(t,x)dt - \int_0^T \sum_{x\in V} \mu(x)\Delta u(t,x) \Psi_1(t,x)dt \notag \\
&\hspace{5cm}- \int_0^T \sum_{x\in V} \mu(x) u(t,x)\partial_t\Psi_1(t,x)dt \notag\\
&\qquad=\int_0^T \sum_{x\in V} \mu(x)\Psi_1(t,x)|v(t,x)|^p dt+ \varepsilon \sum_{x\in V} \mu(x) \big((u_0(x)+u_1(x)\big)\Psi_1(0,x) \notag \\
&\hspace{5cm}- \varepsilon \sum_{x\in V} \mu(x)u_0(x)\partial_t\Psi_1(0,x),
\end{align}
as well as
\begin{align} \label{Integral_02}
&\int_0^T \sum_{x\in V} \mu(x) v(t,x) \partial_t^2\Psi_2(t,x)dt - \int_0^T \sum_{x\in V} \mu(x)\Delta v(t,x) \Psi_2(t,x)dt \notag \\
&\hspace{5cm}- \int_0^T \sum_{x\in V} \mu(x) v(t,x)\partial_t\Psi_2(t,x)dt \notag\\
&\qquad=\int_0^T \sum_{x\in V} \mu(x)\Psi_2(t,x)|u(t,x)|^q dt+ \varepsilon \sum_{x\in V} \mu(x) \big((v_0(x)+v_1(x)\big)\Psi_2(0,x) \notag \\
&\hspace{5cm}- \varepsilon \sum_{x\in V} \mu(x)v_0(x)\partial_t\Psi_2(0,x),
\end{align}
for any $\Psi_1,\Psi_2\in\mathcal{C}_0^{\infty}\big([0,T)\times V\big)$. If $T= \ity$, we say that the pair of functions $(u,v)$ is a global (in time) weak solution to the Cauchy problem \eqref{System_Main}.
\end{definition}

\begin{definition}\label{Defn_Lifespan}
The quantity $T_\varepsilon$, which is defined by
\begin{align*}
T_\varepsilon:= \sup \big\{T\in (0,\ity) : &\mbox{ There exists a unique local (in time) weak solution } \notag \\
	&\,\,\, u \mbox{ to \eqref{Equation_Main}}/(u,v) \mbox{ to \eqref{System_Main} on } [0,T) \mbox{ in the sense of Definition \ref{Defn1_Weak}/\ref{Defn2_Weak}} \notag\\
    &\mbox{ with a fixed parameter }\varepsilon>0\big\},
\end{align*}
is called the lifespan (or the so-called maximum existence time) of solutions to the problem \eqref{Equation_Main}/\eqref{System_Main}, respectively.
\end{definition}

Our main results of this paper read as follows.
\begin{theorem} \label{theorem1.1}
    Let us assume that the graph $V$ is infinite and the initial data $u_0,u_1 \in \mathcal{C}^\ity_0(V)$ satisfy
	\begin{align}\label{Assumption1_Initial_data}
		\sum_{x\in V}\mu(x)\big(u_0(x)+ u_1(x)\big) >0.
	\end{align}
	If $p>1$ fulfills the condition for volume growth as follows:
    \begin{equation} \label{Critical1_Condition}
        {\rm Vol}(B(x_0,R))\lesssim R^{\frac{1+\nu}{p-1}}
    \end{equation}
    for any sufficiently large $R$, where $x_0$ and $\nu$ are defined as in Section \ref{Sec.2.1}, then every non-trivial local (in time) weak solution to \eqref{Equation_Main} blows up in finite time. Moreover, there exists a positive constant $\e_0$ such that for any $\e \in (0,\e_0]$ the lifespan $T_\e$ of weak solutions to the Cauchy problem \eqref{Equation_Main} possesses the following estimates:
    \begin{itemize}[leftmargin=*]
    \item If ${\rm Vol}(B(x_0,R))= o\Big(R^{\frac{1+\nu}{p-1}}\Big)$, then
	\begin{equation}\label{Lifespan11}
		\frac{T_\e}{\Big({\rm Vol}(B(x_0,T_\e^{\frac{1}{1+\nu}}))\Big)^{p-1}} \leqslant C\e^{-(p-1)}.
        \end{equation}
        \item If ${\rm Vol}(B(x_0,R))\sim R^{\frac{1+\nu}{p-1}}$, then
        \begin{equation}\label{Lifespan12}
			T_\e \le \exp\left(C\e^{-(p-1)}\right).
	\end{equation}
    \end{itemize}
	Here $C$ is a positive constant independent of $\e$.
\end{theorem}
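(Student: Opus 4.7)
The plan is to implement the test function method adapted to the weighted graph setting, plugging a tensor-product test function $\Psi(t,x) = \eta_T(t)\varphi_R(x)$ into the weak formulation \eqref{Integral_00} and exploiting the discrete integration by parts of Lemma \ref{Inte-by-parts.Lemma} to transfer the Laplacian from $u$ to $\varphi_R$. The temporal cutoff $\eta_T \in \mathcal{C}_0^\infty([0,T))$ will be chosen nonincreasing, equal to $1$ on $[0,T/2]$, with $\eta_T(0)=1$ and $\eta_T'(0)=0$, so that the boundary contribution at $t=0$ reduces precisely to the positive quantity $\varepsilon I_0$, where $I_0:=\sum_{x\in V}\mu(x)(u_0(x)+u_1(x))>0$ by \eqref{Assumption1_Initial_data}. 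The spatial cutoff is built from the distance as $\varphi_R(x)=\chi(d(x,x_0)/R)$ with a fixed smooth $\chi$ supported in $[0,1]$ and equal to $1$ on $[0,1/2]$, and the scale $R$ will be coupled to $T$ (in the subcritical volume case, $R=T^{1/(1+\nu)}$, while in the critical case a logarithmic correction is inserted).

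Moving $|u|$-terms to the right-hand side and applying Young's inequality to each derivative term with the splitting $|u|=|u|\Psi^{1/p}\cdot\Psi^{-1/p}$, the nonlinear contribution $\int\Psi|u|^p$ can be absorbed into the left-hand side, producing the master estimate
\begin{equation*}
\varepsilon I_0 \lesssim \int_0^T \sum_{x\in V}\mu(x)\,\Psi^{-\frac{p'}{p}}\Bigl(|\partial_t^2\Psi|^{p'}+|\partial_t\Psi|^{p'}+|\Delta_x\Psi|^{p'}\Bigr)dt,
\end{equation*}
with $p'=p/(p-1)$. The dominant temporal scaling for the damped wave equation is provided by $\partial_t\Psi\sim T^{-1}$ rather than by $\partial_t^2\Psi$, which reproduces the effective parabolic balance $R^{1+\nu}\sim T$ (and thereby pins down the exponent $1/(1+\nu)$ seen in \eqref{Lifespan11}). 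At the discrete level, the pointwise Laplacian-of-distance control from Section \ref{Sec.2.1}, encoded through $\nu$ and following the technique of \cite{MonticelliPunzoSomaglia-2024,MonticelliPunzoSomaglia-2025}, yields $|\Delta\varphi_R(x)|\lesssim R^{-(1+\nu)}$ on the annular support $B(x_0,R)\setminus B(x_0,R/2)$. Summing in $x$ brings in ${\rm Vol}(B(x_0,R))$ as an overall factor, and with $R=T^{1/(1+\nu)}$ both the temporal and spatial contributions reduce to
\begin{equation*}
\varepsilon \lesssim \frac{\bigl({\rm Vol}(B(x_0,T^{1/(1+\nu)}))\bigr)^{1/(p-1)}}{T^{1/(p-1)}},
\end{equation*}
which is \eqref{Lifespan11} after rearrangement. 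The blow-up in finite time is then an immediate corollary: if a global solution existed, sending $T\to\infty$ together with ${\rm Vol}(B(x_0,R))=o(R^{(1+\nu)/(p-1)})$ would force $\varepsilon\le 0$, contradicting $\varepsilon>0$.

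For the critical volume regime ${\rm Vol}(B(x_0,R))\sim R^{(1+\nu)/(p-1)}$, the scaling above only produces $\varepsilon\lesssim 1$, which is vacuous. Following the refined test function strategy announced in the introduction, the remedy is to insert a parameter-dependent logarithmic correction, for example by taking $\eta_T(t)=\tilde{\eta}\bigl(\log(e+t)/\log(e+T)\bigr)$ or weighting $\varphi_R$ by a logarithmic factor of the distance. Recomputing the master estimate with this choice produces an additional factor $(\log T)^{-c}$ on the right-hand side, which combined with the critical volume equality yields $\varepsilon^{p-1}\log T\lesssim 1$, i.e. exactly \eqref{Lifespan12}. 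The main obstacle throughout is the pointwise bound on $\Delta\varphi_R$: on a general weighted graph there is no chain rule, so one must carefully propagate the Laplacian-of-distance estimate recalled in Section \ref{Sec.2.1} through the composition $\chi(d(\cdot,x_0)/R)$ while keeping the error on the annular support of order $R^{-(1+\nu)}$, and the critical-case refinement further demands that this estimate be quantitative enough to tolerate a non-scale-invariant logarithmic weight without destroying the delicate balance between time, space and volume.
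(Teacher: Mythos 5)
Your strategy for the subcritical volume regime is essentially the paper's (test function method with the anisotropic balance $T\sim R^{1+\nu}$ and the Laplacian-of-distance estimate), but note a computational slip: carrying out the balance gives $\varepsilon\lesssim T^{-\frac{1}{p-1}}\,{\rm Vol}\big(B(x_0,T^{\frac{1}{1+\nu}})\big)$, with the volume entering to the \emph{first} power (the conjugate exponents $\frac{1}{p'}$ and $\frac{p}{p-1}$ multiply to $1$), whereas your displayed inequality has ${\rm Vol}^{\frac{1}{p-1}}$ and does not rearrange to \eqref{Lifespan11}. The paper also uses a single joint cutoff $\phi\big((t^{\alpha+2}+d(x_0,x)^4)/R^4\big)^{\beta+2}$ rather than a tensor product, and crucially estimates the nonlinear term by H\"older (keeping $P_R^{1/p}$ on the right) instead of absorbing it by Young; this difference matters below.

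The genuine gap is the critical volume regime, which you need both for the blow-up claim (the hypothesis \eqref{Critical1_Condition} is only ${\rm Vol}(B(x_0,R))\lesssim R^{\frac{1+\nu}{p-1}}$, so your ``immediate corollary'' via $T\to\infty$ covers only the $o(\cdot)$ case) and for \eqref{Lifespan12}. Your proposed remedy --- a logarithmically corrected cutoff producing ``an additional factor $(\log T)^{-c}$'' --- is asserted, not derived, and it is precisely here that the exponent $\varepsilon^{-(p-1)}$ must be produced; nothing in your sketch pins down $c$ or shows the volume factor cancels correctly. The paper's mechanism is different and relies on the quantity your Young absorption discards: from the H\"older form one keeps $P_R$, shows $P_R\le C$ uniformly, and (for blow-up) uses that the derivative terms are supported where $\Phi_R^*$ lives, an annulus escaping to infinity, so the right-hand side of \eqref{Estimate-07} tends to $0$ and forces $\sum_x\mu(x)(u_0(x)+u_1(x))=0$. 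For the critical lifespan it introduces $h(r)=\int_0^T\sum_x\mu(x)|u(t,x)|^p\Phi_r^*(t,x)\,dt$ and the Mellin-type average $\mathcal{H}(R)=\int_0^R h(r)r^{-1}dr$, proves $\mathcal{H}(R)\le\frac{\log 2}{4}P_R$ and $R\mathcal{H}'(R)=h(R)$, and integrates the resulting pair of differential inequalities $R^{-1}\mathcal{H}(R)^p\lesssim\mathcal{H}'(R)$ and $\varepsilon^pR^{-1}\lesssim\mathcal{H}'(R)$ over $[\sqrt{R},R]$ and $[R_0,\sqrt{R}]$ to obtain $\log R\lesssim\varepsilon^{-(p-1)}$. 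Without either this functional argument or a fully worked-out logarithmic test function computation, the critical case --- the hardest part of the theorem --- remains unproved in your proposal.
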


\begin{theorem} \label{theorem1.2}
    Let us assume that the graph $V$ is infinite and the initial data $u_j,v_j \in \mathcal{C}^\ity_0(V)$ with $j=0,1$ satisfy
	\begin{align}\label{Assumption2_Initial_data}
		\sum_{x\in V}\mu(x)\big(u_0(x)+ u_1(x)\big) >0 \ \ \text{and}\ \  \sum_{x\in V}\mu(x)\big(v_0(x)+ v_1(x)\big) >0.
	\end{align}
	If $p,q>1$ fulfill the condition for volume growth as follows:
    \begin{equation} \label{Critical2_Condition}
        {\rm Vol}(B(x_0,R))\lesssim R^{(1+\nu)\Gamma(p,q)}
    \end{equation}
    for any sufficiently large $R$, where $x_0$ and $\nu$ are defined as in Section \ref{Sec.2.1}, then every non-trivial local (in time) weak solution to \eqref{System_Main} blows up in finite time. Moreover, there exists a positive constant $\e_0$ such that for any $\e \in (0,\e_0]$ the lifespan $T_\e$ of weak solutions to the Cauchy problem \eqref{System_Main} possesses the following upper bounds:
    \begin{itemize}[leftmargin=*]
    \item If ${\rm Vol}(B(x_0,R))= o\Big(R^{(1+\nu)\Gamma(p,q)}\Big)$, then
	\begin{equation}\label{Lifespan21}
		\frac{T_\e}{\Big({\rm Vol}(B(x_0,T_\e^{\frac{1}{1+\nu}}))\Big)^{\frac{1}{\Gamma(p,q)}}} \leqslant C\e^{-{\frac{1}{\Gamma(p,q)}}}.
        \end{equation}
        \item If ${\rm Vol}(B(x_0,R))\sim R^{(1+\nu)\Gamma(p,q)}$, then
	\begin{align}\label{Lifespan22}
		T_\e \leqslant\begin{cases}
			\exp\left(C\e^{-(p-1)}\right) &\text{if}\ \ p=q, \\
			\exp\left(C\e^{-\frac{\max\{p,q\}}{\Gamma(p,q)}}\right) &\text{if}\ \ p\neq q.
		\end{cases}
	\end{align}
    \end{itemize}
	Here $C$ is a positive constant independent of $\e$.
\end{theorem}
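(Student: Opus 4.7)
\textbf{Proof proposal for Theorem \ref{theorem1.2}.}

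The plan is to adapt the refined test-function method developed for Theorem \ref{theorem1.1} to the coupled system \eqref{System_Main}, linking the two weak formulations \eqref{Integral_01}--\eqref{Integral_02} through H\"older's inequality, and then to enhance the test functions with polynomial-logarithmic time weights in order to treat the critical volume regime.

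First I would pick a reference vertex $x_0$ and build two nonnegative product-type test functions $\Psi_i(t,x)=\phi_i(t)\,\eta_R(x)$ with $i=1,2$, where $\eta_R$ is a distance-based spatial cut-off supported in $B(x_0,2R)$ and identically $1$ on $B(x_0,R)$, and the time cut-offs depend on a free parameter $T$ with two \emph{different} exponents, $\phi_i(t)=\phi(t/T)^{\alpha_i}$. The asymmetric choice is essential because the system is genuinely asymmetric as soon as $p\neq q$; the pointwise control of $\Delta\eta_R$ in terms of the parameter $\nu$ introduced in Section \ref{Sec.2.1} is exactly what is needed to replace the Euclidean scaling $R\sim T$ by its graph analogue $R\sim T^{1/(1+\nu)}$.

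Plugging $\Psi_1$ into \eqref{Integral_01} and $\Psi_2$ into \eqref{Integral_02} and applying Lemma \ref{Inte-by-parts.Lemma} to transfer the Laplacian onto the test functions, I would set
\[
I(T):=\int_0^T\sum_{x\in V}\mu(x)\,\Psi_1(t,x)\,|v(t,x)|^p\,dt, \qquad J(T):=\int_0^T\sum_{x\in V}\mu(x)\,\Psi_2(t,x)\,|u(t,x)|^q\,dt.
\]
A double application of H\"older's inequality, together with the volume bound \eqref{Critical2_Condition} at the balanced scale $R=T^{1/(1+\nu)}$, yields two coupled estimates of the schematic form
\[
\e\,C_0+I(T)\le C\,T^{\sigma_1}\,J(T)^{1/q},\qquad \e\,C_0+J(T)\le C\,T^{\sigma_2}\,I(T)^{1/p},
\]
where the positive $\e C_0$ on the left is generated by the sign assumption \eqref{Assumption2_Initial_data} and the exponents $\sigma_1,\sigma_2$ are read off the scaling. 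Substituting the second inequality into the first and using $pq>1$ to isolate $I(T)$ collapses this system to the single scalar bound
\[
\e\;\lesssim\;T^{-1}\,\bigl({\rm Vol}(B(x_0,T^{1/(1+\nu)}))\bigr)^{\Gamma(p,q)},
\]
which under the borderline hypothesis \eqref{Critical2_Condition} rules out global solutions by letting $T\to\infty$, and under the strict subcritical assumption ${\rm Vol}(B(x_0,R))=o\bigl(R^{(1+\nu)\Gamma(p,q)}\bigr)$ delivers \eqref{Lifespan21} on setting $T=T_\e$; the smallness threshold $\e_0$ arises from demanding that the $\e C_0$ term dominate the absorbed constants.

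In the critical regime ${\rm Vol}(B(x_0,R))\sim R^{(1+\nu)\Gamma(p,q)}$ the pure polynomial scaling is marginal, so I would replace $\phi_i$ by a polynomial-logarithmic weight $\phi(t/T)^{\alpha_i}\bigl(\log(eT/t)\bigr)^{-\beta_i}$, calibrate $\beta_1,\beta_2$ so that differentiating $\Psi_i$ produces an extra logarithmic gain just beating the critical scaling, and rerun the H\"older iteration. When $p=q$ the resulting inequalities are symmetric and reduce to a scalar estimate analogous to the one in Theorem \ref{theorem1.1}, recovering $T_\e\le\exp(C\e^{-(p-1)})$; when $p\neq q$ the weights are intrinsically asymmetric and the sharper exponent $\max\{p,q\}/\Gamma(p,q)$ emerges from optimizing the coupled H\"older chain in the spirit of \cite{ChenDao2021}. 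The principal obstacle I foresee is twofold: first, obtaining sharp pointwise control of $\Delta\eta_R$, since a graph has no chain rule and the bound must be extracted directly from the definition of $\Delta$ together with the triangle-type inequality for $d$, which is the very source of the exponent $\nu$; second, the delicate bookkeeping of the asymmetric critical case $p\neq q$, where $(\beta_1,\beta_2)$ must be jointly tuned so that both coupled inequalities simultaneously yield the sharp rate $\max\{p,q\}/\Gamma(p,q)$.
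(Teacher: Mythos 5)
Your overall strategy (test functions adapted to the graph distance, transferring the Laplacian by Lemma \ref{Inte-by-parts.Lemma}, coupling the two weak formulations through H\"older, and then closing the loop with the elementary inequality $Ay^{\gamma}-y\le A^{1/(1-\gamma)}$) is the same as the paper's in spirit, and your Case 1 of the lifespan estimate is essentially the paper's argument. Two remarks on the differences that do not cause trouble: the paper uses a single combined space--time cut-off $\phi\big((t^{\alpha+2}+d(x_0,x)^4)/R^4\big)^{\beta+2}$ with $\alpha=2(1-\nu)/(1+\nu)$ rather than your product $\phi_i(t)\eta_R(x)$, which automatically enforces the anisotropic scaling $R\sim T^{1/(1+\nu)}$ that you impose by hand; and your schematic scalar bound $\e\lesssim T^{-1}({\rm Vol})^{\Gamma(p,q)}$ is not the one consistent with \eqref{Lifespan21} --- the correct collapsed inequality is $\e\lesssim T^{-\Gamma(p,q)}\,{\rm Vol}\big(B(x_0,T^{1/(1+\nu)})\big)$, which is what the exponent bookkeeping actually produces.

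There is, however, a genuine gap in your blow-up argument. You claim that under the borderline hypothesis \eqref{Critical2_Condition} the scalar bound "rules out global solutions by letting $T\to\infty$". When ${\rm Vol}(B(x_0,R))\sim R^{(1+\nu)\Gamma(p,q)}$, the right-hand side of $\e\lesssim T^{-\Gamma(p,q)}\,{\rm Vol}\big(B(x_0,T^{1/(1+\nu)})\big)$ is bounded from below by a positive constant as $T\to\infty$, so no contradiction is obtained; this is precisely the classical failure of the naive test-function argument at the critical scale. The paper closes this by distinguishing $\Phi_R$ from the annular cut-off $\Phi_R^*$: the H\"older chain first yields the uniform bound $J_R\lesssim 1$, hence $\int_0^\infty\sum_{x}\mu(x)|u|^q\Phi_R^*\,dt\to 0$ as $R\to\infty$ (the mass escapes through the annulus), and then the refined inequality with the data term on the left forces $\sum_x\mu(x)(v_0(x)+v_1(x))\le 0$, contradicting \eqref{Assumption2_Initial_data}. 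Your proposal, having discarded the annular information by immediately collapsing to a scalar bound, cannot reach this conclusion. A second, softer gap is the critical-volume lifespan estimate \eqref{Lifespan22}: your plan to insert polynomial-logarithmic weights and "jointly tune" $(\beta_1,\beta_2)$ is only a sketch, whereas the paper's route is concrete --- it integrates the annular functionals over the scale parameter, $\mathcal{G}_p(R)=\int_0^R g_p(r)r^{-1}dr$ and similarly $\mathcal{G}_q$, obtains the coupled differential inequalities $\mathcal{G}_p'(R)\ge C R^{\nu-(1+\nu)(p-1)\Gamma(p,q)}\big(c\e+\mathcal{G}_q(R)\big)^p$ (and its symmetric counterpart), and then invokes the ODE analysis of \cite{ChenDao2021} to extract the exponents $p-1$ (if $p=q$) and $\max\{p,q\}/\Gamma(p,q)$ (if $p\neq q$). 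You would need to either reproduce that scale-integration device or actually carry out the logarithmic calibration, including verifying that the $\Delta\eta_R$ estimate survives the extra weight, before the critical case can be considered proved.
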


\begin{remark}
    \fontshape{n}
    \selectfont
    Obviously, if $p=q$, then the condition \eqref{Critical2_Condition} is exactly equivalent to \eqref{Critical1_Condition} and the lifespan estimate in \eqref{Lifespan21} really turns to that in \eqref{Lifespan11}. This phenomenon causes from the fact that the system \eqref{System_Main} in this case can be re-expressed as the equation \eqref{Equation_Main}.    
\end{remark}

\begin{example}
    \fontshape{n}
    \selectfont
    If we take $V$ to be the $n$-dimensional integer lattice graph $\Z^n$, then from Example \ref{Example-2.1} we rewrite the conditions \eqref{Critical1_Condition} and \eqref{Critical2_Condition} in the following ways:
    $$ n \le \frac{2}{p-1}, \quad \text{i.e.}\quad p \le 1+\frac{2}{n} = p_{\mathrm{Fuj}}(n) $$
    and
    $$ n \le 2\Gamma(p,q), \quad \text{i.e.}\quad \Gamma(p,q) \ge \frac{n}{2}, $$
    which are the same conditions when the blow-up phenomenon of \eqref{IntrDampedWaveSemilin} and \eqref{Eq_Coupled_Damped_Waves} in finite time occurs in $\R^n$, respectively. Moreover, the achieved estimates \eqref{Lifespan11}-\eqref{Lifespan12} and \eqref{Lifespan21}-\eqref{Lifespan22} for lifespan of solutions to \eqref{Equation_Main} and \eqref{System_Main} are determined as follows:
    \begin{align*}
	   T_\e \lesssim \begin{cases}
		C\varepsilon^{-\frac{2(p-1)}{2-n(p-1)}}&\mbox{if}\ \ 1<p<p_{\mathrm{Fuj}}(n),\\
		\exp\left(C\varepsilon^{-(p-1)}\right)&\mbox{if}\ \ p=p_{\mathrm{Fuj}}(n),
	\end{cases}
    \end{align*}
    and
   	\begin{align*}
		T_\e \lesssim \begin{cases}
			C\varepsilon^{-\frac{1}{\Gamma(p,q)-n/2}} &\text{ if }\, \Gamma(p,q)>n/2, \\
			\exp\left(C\e^{-(p-1)}\right) &\text{ if }\Gamma(p,q)=n/2,\ \ p=q, \\
			\exp\left(C\e^{-\frac{\max\{p,q\}}{\Gamma(p,q)}}\right) &\text{ if }\Gamma(p,q)=n/2,\ \ p\neq q,
		\end{cases}
	\end{align*}
    which coincide with the corresponding upper bounds in \eqref{Lifespan.Eq} and \eqref{Lifespan.Sys}, respectively, again. For these observations, we can say that  the optimality of our results in this paper is partially guaranteed in a comparison with the setting of the Euclidean space. 
\end{example}

\begin{remark}
    \fontshape{n}
    \selectfont
    Although the estimates for lifespan of solutions from the below have not yet reported in this paper, we strongly believe, as analyzed in the above example, that the corresponding lower bounds behave asymptotically as their upper ones to claim the sharpness of our results. For this reason, it is a conjecture arising from this work.  
\end{remark}

\textbf{The organization of this paper is as follows:} In Section \ref{Sec.2}, we give some assumptions on the weighted graph $(V,\omega,\mu)$ and introduce the setting of test functions as well. Then, we present the proofs of Theorems \ref{theorem1.1} and \ref{theorem1.2} in Sections \ref{Sec.3} and \ref{Sec.4}, respectively, including the blow-up result and the estimates for lifespan of solutions simultaneously. Finally, several final remarks will be discussed in Section \ref{Sec.5} in this paper.

\section{Preliminaries} \label{Sec.2}

\subsection{Assumptions} \label{Sec.2.1}
In this section, let us give some assumptions as follows:
\begin{itemize}
    \item[(i)] $(V,\omega,\mu)$ is a connected, locally finite, undirected and weighted graph.
    \item[(ii)] $\dps\sum_{y\sim x} \omega(x,y) \lesssim \mu(x)$ for any $x\in V$.
    \item[(iii)] There exists a distance function $d=d(x,y)$ having a finite jump size $L$, i.e. it holds
    $$ L:= \sup\{d(x,y) : x,y\in V \text{ and }\omega(x,y)>0\}< \ity. $$
    \item[(iv)] $B(x,r)$ is assumed to be a finite set for any $x\in V$.
    \item[(v)] There exists $x_0\in V$, $R_0>1$ and $\nu\in [0,1]$ such that it holds
    $$ |\Delta d(x_0,x)| \lesssim \frac{1}{d(x_0,x)^\nu}\quad \text{ for all }x\in V\setminus B(x_0,R_0). $$
\end{itemize}

\begin{example} \label{Example-2.1}
    \fontshape{n}
    \selectfont
    If we take $V$ to be the $n$-dimensional integer lattice graph $\Z^n$, then we may determine as follows (see, for example, \cite{MonticelliPunzoSomaglia-2024,MonticelliPunzoSomaglia-2025}): $\omega(x,y)=
        \begin{cases}
            1 &\text{ if }x,y \text{ are connected} \\
            0 &\text{ otherwise}
        \end{cases}$, $\mu(x)= 2n$, $d(x,y):= |x-y|$ and $\nu=1$. Moreover, the volume of a ball $B(x,r)$ can be given by
        $$ {\rm Vol}(B(x,r))\sim r^n. $$
\end{example}

\subsection{Setting of test functions} \label{Sec.2.2}
We now introduce a test function $\phi=\phi(r)$ such that
\begin{align*}
\phi\in\mathcal{C}_0^{\infty}([0,\infty))\ \ \mbox{and}\ \ \phi(r):=\begin{cases}
	1&\mbox{if}\ \ r\in[0,1/2],\\
	\mbox{decreasing}&\mbox{if}\ \ r\in(1/2,1),\\
	0&\mbox{if}\ \ r\in[1,\infty).
\end{cases}
\end{align*}
Moreover, another test function $\phi^*=\phi^*(r)$ is also introduced by
\begin{align*}
 \phi^*(r):= \begin{cases}
		0&\mbox{if}\ \ r\in[0,1/2),\\
	\phi(r)&\mbox{if}\ \ r\in[1/2,\infty).
\end{cases}
\end{align*}
Next, for a large parameter $R\in(0,\infty)$, we take $\Phi_R=\Phi_R(t,x)$ and $\Phi_R^*=\Phi_R^*(t,x)$ defined by
\begin{align*}
\Phi_R(t,x):= \left(\phi\left(\frac{t^{\alpha+2} + d(x_0,x)^4}{R^4}\right)\right)^{\beta+2}\ \ \mbox{ and }\ \ \Phi_R^*(t,x):= \left(\phi^*\left(\frac{t^{\alpha+2} +d(x_0,x)^4}{R^4}\right)\right)^{\beta+2},
\end{align*}
respectively, where $\alpha$ and $\beta$ are two nonnegative constants which will be chosen later. It is clear to see that
\begin{align*}
    {\rm supp}\,\Phi_R &\subset \left\{(t,x) \in [0,\ity) \times V \,:\, t^{\alpha+2}+ d(x_0,x)^4 \leqslant R^4\right\}, \\
    {\rm supp}\,\Phi^*_R &\subset \left\{(t,x)\in [0,\ity) \times V \,:\, \frac{R^4}{2} \leqslant t^{\alpha+2}+ d(x_0,x)^4 \leqslant R^4\right\}.
\end{align*}

\begin{lemma} \label{lemma2.2}
For any $(t,x) \in [0,\ity) \times V$, the following auxiliary estimates hold:
\begin{align*}
	\left|\partial_t\Phi_R(t,x)\right| &\lesssim R^{-\frac{4}{\alpha+2}} \big(\Phi_R^*(t,x)\big)^{\frac{\beta+1}{\beta+2}}, \\
	\left|\partial^2_t\Phi_R(t,x)\right| &\lesssim R^{-\frac{8}{\alpha+2}}\big(\Phi_R^*(t,x)\big)^{\frac{\beta}{\beta+2}}, \\
	\left| \Delta \Phi_R(t,x)\right| &\lesssim R^{-(1+\nu)}\big(\Phi_R^*(t,x)\big)^{\frac{\beta+1}{\beta+2}},
\end{align*}
where $\nu$ is defined from Assumption ${\rm (v)}$ in Section \ref{Sec.2.1}.
\end{lemma}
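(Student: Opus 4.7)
The plan is to set $s(t,x) := (t^{\alpha+2} + d(x_0,x)^4)/R^4$ so that $\Phi_R = \phi(s)^{\beta+2}$, and then handle the three estimates separately: the two temporal ones through the elementary chain rule, and the Laplacian one via a second-order Taylor expansion of the radial profile combined with Assumption (v). For the time derivatives, direct differentiation yields
\[
\partial_t\Phi_R = (\beta+2)(\alpha+2)\phi(s)^{\beta+1}\phi'(s)\,\frac{t^{\alpha+1}}{R^4},
\]
and a further differentiation produces three terms with cutoff factors $\phi^\beta(\phi')^2$, $\phi^{\beta+1}\phi''$ and $\phi^{\beta+1}\phi'$ weighted by $(t^{\alpha+1}/R^4)^2$ or $t^\alpha/R^4$. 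On the support of $\phi'(s)$ or $\phi''(s)$ one has $s\in[1/2,1]$, hence $\phi(s)=\phi^*(s)$ and $t^{\alpha+2}\leq R^4$, so $t^{\alpha+1}/R^4\leq R^{-4/(\alpha+2)}$ and $t^\alpha/R^4\leq R^{-8/(\alpha+2)}$. Combining these with $\phi(s)^k=(\Phi_R^*)^{k/(\beta+2)}$ and the elementary inequality $\phi^{\beta+1}\leq\phi^\beta$ (since $0\leq\phi\leq 1$) will immediately yield the first two estimates.

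For the Laplacian, the idea is to write $\Phi_R(t,x)=F(d(x_0,x))$ with $F(r):=\phi((t^{\alpha+2}+r^4)/R^4)^{\beta+2}$, and apply a second-order Taylor expansion for each neighbor $y\sim x$:
\[
F(d(x_0,y))-F(d(x_0,x)) = F'(d(x_0,x))(d(x_0,y)-d(x_0,x)) + \tfrac{1}{2}F''(\xi_y)(d(x_0,y)-d(x_0,x))^2.
\]
Averaging against $\omega(x,y)/\mu(x)$ produces the identity $\Delta\Phi_R(t,x) = F'(d(x_0,x))\,\Delta d(x_0,x) + \mathcal{E}(t,x)$. Since $F'(r) = 4(\beta+2)(r^3/R^4)\phi(s)^{\beta+1}\phi'(s)$ and $r\leq R$ on the support of $\phi'(s)$, combining Assumption (v) (for $r\geq R_0$) with the trivial boundedness of $|\Delta d|$ on the finite ball $B(x_0,R_0)$ (Assumption (iv)) will yield
\[
|F'(d(x_0,x))\,\Delta d(x_0,x)| \lesssim \frac{d(x_0,x)^{3-\nu}}{R^4}\,(\Phi_R^*)^{\frac{\beta+1}{\beta+2}} \lesssim R^{-(1+\nu)}(\Phi_R^*)^{\frac{\beta+1}{\beta+2}},
\]
which is the desired main bound.

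The remainder $\mathcal{E}(t,x)$ will be controlled using Assumption (iii) to estimate $|d(x_0,y)-d(x_0,x)|\leq L$ and Assumption (ii) to estimate $\sum_{y\sim x}\omega(x,y)/\mu(x)\lesssim 1$, leaving a uniform bound on $|F''|$ on the support to be obtained. A direct computation gives $|F''(r)|\lesssim R^{-2}\bigl(\phi(s)^{\beta}(\phi'(s))^2 + \phi(s)^{\beta+1}(|\phi''(s)|+|\phi'(s)|)\bigr)$ when $r\leq R$, and the hard step will be to absorb this into $R^{-(1+\nu)}(\Phi_R^*)^{(\beta+1)/(\beta+2)}$: one must upgrade $(\phi')^2\phi^\beta$ to a multiple of $\phi^{\beta+1}$, which will be done by choosing the cutoff $\phi$ in Section \ref{Sec.2.2} to vanish sufficiently flatly at $r=1$ so that $(\phi')^2\lesssim\phi$ (for instance $\phi(r)=(1-r)^k$ on $[1/2,1]$ with $k\geq 2$, smoothly extended near $r=1/2$), while $R^{-2}\leq R^{-(1+\nu)}$ follows from $\nu\leq 1$. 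A last technical subtlety is that the Taylor intermediate value $\xi_y$ must be replaced by $d(x_0,x)$ in the cutoff factor up to harmless constants, which is permissible because $|\xi_y-d(x_0,x)|\leq L$ induces only an $O(L/R)$ shift in the argument $s$ for large $R$.
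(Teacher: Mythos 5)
Your proposal is correct in substance, and the two time-derivative estimates are proved exactly as in the paper. For the Laplacian bound, however, you take a genuinely different route. You Taylor-expand the full composite $F(r)=\bigl(\phi((t^{\alpha+2}+r^4)/R^4)\bigr)^{\beta+2}$ to second order in $r=d(x_0,\cdot)$, which yields the exact identity $\Delta\Phi_R=F'(d(x_0,x))\,\Delta d(x_0,x)+\mathcal{E}$ and hence a two-sided bound. The paper instead first applies the convexity inequality $b^{\beta+2}-a^{\beta+2}\ge(\beta+2)a^{\beta+1}(b-a)$ to peel off the factor $\phi^{\beta+1}$ \emph{before} expanding, and only then Taylor-expands $\phi$ in the variable $s$ together with the algebraic expansion of $d(x_0,y)^4-d(x_0,x)^4$; this produces only the one-sided estimate $-\Delta\Phi_R\lesssim R^{-(1+\nu)}\phi^{\beta+1}$, which is all that is used downstream. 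The trade-off is exactly the one you identify: differentiating the $(\beta+2)$-power twice creates the term $\phi^{\beta}(\phi')^2$ in $F''$, so your route needs the additional flatness hypothesis $(\phi')^2\lesssim\phi$ on the cutoff, whereas the paper's convexity step makes every remainder term carry the clean factor $\phi^{\beta+1}$ and needs no such hypothesis. Your fix is legitimate and standard (though note that $\phi(r)=(1-r)^k$ is only $C^{k-1}$ at $r=1$; taking $\phi=\psi^{2}$ for a standard $C^\infty$ cutoff $\psi$, or an exponential-type profile, gives $(\phi')^2\lesssim\phi$ without sacrificing smoothness). Both arguments share the same mild subtlety about the second-order remainder being evaluated at an intermediate point within distance $L$ of $d(x_0,x)$, which you and the paper dispose of in the same way for $R$ large, and both use Assumptions (ii), (iii) and (v) in the same places, with $R^{-2}\le R^{-(1+\nu)}$ from $\nu\le 1$ absorbing the error terms.
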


\begin{proof}
We can follow ideas from the proof of Theorem $2.6$ in \cite{MonticelliPunzoSomaglia-2024} (or Theorem $3.1$ in \cite{MonticelliPunzoSomaglia-2025}) with some minor modifications to conclude the desired estimates. However, for the convenience of the reader we will present the proof of this lemma in detail. First, straightforward calculations give
\begin{align*}
    \partial_t\Phi_R(t,x)&= (\alpha+2)(\beta+2) \frac{t^{\alpha+1}}{R^4} \left(\phi\left(\frac{t^{\alpha+2} + d(x_0,x)^4}{R^4}\right)\right)^{\beta+1}\phi'\left(\frac{t^{\alpha+2} + d(x_0,x)^4}{R^4}\right)
\end{align*}
and
\begin{align*}
    \partial_t^2\Phi_R(t,x)&= ({\alpha+2})(\alpha+1)(\beta+2) \frac{t^\alpha}{R^4}  \left(\phi\left(\frac{t^{\alpha+2} + d(x_0,x)^4}{R^4}\right)\right)^{\beta+1}\phi'\left(\frac{t^{\alpha+2} + d(x_0,x)^4}{R^4}\right) \\
    &\quad +(\alpha+2)^2(\beta+1)(\beta+2) \frac{t^{2\alpha+2}}{R^8} \left(\phi\left(\frac{t^{\alpha+2} + d(x_0,x)^4}{R^4}\right)\right)^{\beta} \left(\phi'\left(\frac{t^{\alpha+2} + d(x_0,x)^4}{R^4}\right)\right)^2 \\
    &\quad +(\alpha+2)^2(\beta+2) \frac{t^{2\alpha+2}}{R^8} \left(\phi\left(\frac{t^{\alpha+2} + d(x_0,x)^4}{R^4}\right)\right)^{\beta+1} \phi''\left(\frac{t^{\alpha+2} + d(x_0,x)^4}{R^4}\right).
\end{align*}
As the properties
\begin{align*}
    \phi'\left(\frac{t^{\alpha+2} + d(x_0,x)^4}{R^4}\right) \not\equiv0 \quad \text{ and }\quad \phi''\left(\frac{t^{\alpha+2} + d(x_0,x)^4}{R^4}\right) \not\equiv 0
\end{align*}
for any $(t,x)$ fulfilling
$$ \frac{R^4}{2}< t^{\alpha+2} + d(x_0,x)^4 <R^4 $$
remain valid, one may claim the first two estimates in this lemma by noticing the relation
$$ 0<\Phi_R^*(t,x)< 1. $$
What's more, we shall use Assumptions in Section \ref{Sec.2.1} as a key tool to estimate $\Delta \Phi_R(t,x)$. Namely, we obtain
\begin{align}
    -\Delta \Phi_R(t,x) &= -\f{1}{\mu(x)}\sum_{y\in V,\,y\sim x}\omega(x,y)\big(\Phi_R(t,y)- \Phi_R(t,x)\big) \notag \\
    &\le -(\beta+2)\left(\phi\left(\frac{t^{\alpha+2} + d(x_0,x)^4}{R^4}\right)\right)^{\beta+1} \notag \\
    &\qquad \times \f{1}{\mu(x)}\sum_{y\in V,\,y\sim x} \omega(x,y)\left(\phi\left(\frac{t^{\alpha+2} + d(x_0,y)^4}{R^4}\right)-\phi\left(\frac{t^{\alpha+2} + d(x_0,x)^4}{R^4}\right)\right) \notag \\
    &\quad =: (\beta+2)\left(\phi\left(\frac{t^{\alpha+2} + d(x_0,x)^4}{R^4}\right)\right)^{\beta+1}F(t,x), \label{Est.Lemma-1}
\end{align}
where we have utilized the elementary inequality
$$ b^{\beta+2}- a^{\beta+2} \ge (\beta+2)a^{\beta+1}(b-a) \quad \text{for any } a,b\ge 0. $$
By the aid of Taylor expansion, we can express as follows:
\begin{align}
    &\phi\left(\frac{t^{\alpha+2} + d(x_0,y)^4}{R^4}\right)-\phi\left(\frac{t^{\alpha+2} + d(x_0,x)^4}{R^4}\right) \notag \\
    &\quad = \frac{d(x_0,y)^4-d(x_0,x)^4}{R^4}\phi'\left(\frac{t^{\alpha+2} + d(x_0,x)^4}{R^4}\right) \notag \\
    &\qquad + \frac{1}{2}\left(\frac{d(x_0,y)^4-d(x_0,x)^4}{R^4}\right)^2 \phi''\left(\frac{t^{\alpha+2} + \lambda d(x_0,y)^4+ (1-\lambda) d(x_0,x)^4}{R^4}\right) \notag \\
    &\quad = \frac{4d(x_0,x)^3\big(d(x_0,y)-d(x_0,x)\big)+ 6 \big(d(x_0,y)-d(x_0,x)\big)^2 d_1^2}{R^4}\phi'\left(\frac{t^{\alpha+2} + d(x_0,x)^4}{R^4}\right) \notag \\
    &\qquad +\frac{8\big(d(x_0,y)-d(x_0,x)\big)^2 d_2^6}{R^8}\phi''\left(\frac{t^{\alpha+2} + \lambda d(x_0,y)^4+ (1-\lambda) d(x_0,x)^4}{R^4}\right) \label{Est.Lemma-2}
\end{align}
for some $\lambda\in[0,1]$ and $d_1,d_2$ fulfilling
$$ \min\{d(x_0,x),d(x_0,y)\} \le d_1,d_2 \le \max\{d(x_0,x),d(x_0,y)\}. $$
From \eqref{Est.Lemma-1} and \eqref{Est.Lemma-2} one recognizes that
\begin{align*}
    F(t,x) &= -\frac{4d(x_0,x)^3}{R^4}\phi'\left(\frac{t^{\alpha+2} + d(x_0,x)^4}{R^4}\right)\Delta d(x_0,x) \\
    &\quad - \f{6}{\mu(x)R^4}\phi'\left(\frac{t^{\alpha+2} + d(x_0,x)^4}{R^4}\right)\sum_{y\in V,\,y\sim x} \omega(x,y) \big(d(x_0,y)-d(x_0,x)\big)^2 d_1^2\\
    &\quad -\f{8}{\mu(x)R^8}\sum_{y\in V,\,y\sim x} \omega(x,y) \big(d(x_0,y)-d(x_0,x)\big)^2 d_2^6\phi''\left(\frac{t^{\alpha+2} + \lambda d(x_0,y)^4+ (1-\lambda) d(x_0,x)^4}{R^4}\right) \\
    & =: F_1(t,x)+ F_2(t,x)+ F_3(t,x).
\end{align*}
Applying Assumption {\rm (v)} in Section \ref{Sec.2.1} and the fact that $d(x_0,x)\le R$ on ${\rm supp}\,\Phi^*_R$ we arrive at
$$ F_1(t,x) \lesssim -\frac{d(x_0,x)^{3-\nu}}{R^4}\phi'\left(\frac{t^{\alpha+2} + d(x_0,x)^4}{R^4}\right) \lesssim R^{-(1+\nu)} $$
on ${\rm supp}\,\Phi^*_R$. Thanks to the relations
$$ |d(x_0,y)-d(x_0,x)|\le d(x,y) \le L $$
and
$$ d_1 \le \max\{d(x_0,x),d(x_0,y) \le d(x_0,x)+ L $$
one has
$$ F_2(t,x) \le - \f{6L^2}{\mu(x)R^4}\big(d(x_0,x)+L\big)^2\phi'\left(\frac{t^{\alpha+2} + d(x_0,x)^4}{R^4}\right)\sum_{y\in V,\,y\sim x} \omega(x,y) \lesssim R^{-2} $$
on ${\rm supp}\,\Phi^*_R$, provided that $R$ is sufficiently chosen to guarantee $R>L$. Also, in the previous estimate we have utilized Assumption {\rm (ii)} in Section \ref{Sec.2.1} and $d(x_0,x)\le R$ on ${\rm supp}\,\Phi^*_R$ again. In order to control $F_3(t,x)$, we observe that if
$$ \f{R^4}{2}\le t^{\alpha+2} + \lambda d(x_0,y)^4+ (1-\lambda) d(x_0,x)^4\le R^4, $$
then $(t,x)\in {\rm supp}\,\Phi^*_R$ for a sufficiently large number $R$. By the similar argument as we estimated $F_2(t,x)$, we may conclude
$$ F_3(t,x) \lesssim R^{-2}\left|\phi''\left(\frac{t^{\alpha+2} + \lambda d(x_0,y)^4+ (1-\lambda) d(x_0,x)^4}{R^4}\right)\right| \lesssim R^{-2} $$
on ${\rm supp}\,\Phi^*_R$. Combining all previous estimates one finds
\begin{equation} \label{Est.Lemma-3}
    F_3(t,x) \lesssim R^{-(1+\nu)}
\end{equation}
on ${\rm supp}\,\Phi^*_R$. Therefore, from \eqref{Est.Lemma-1} and \eqref{Est.Lemma-3} we have demonstrated the estimate
$$ -\Delta \Phi_R(t,x) \lesssim R^{-(1+\nu)}\left(\phi\left(\frac{t^{\alpha+2} + d(x_0,x)^4}{R^4}\right)\right)^{\beta+1} $$
on ${\rm supp}\,\Phi^*_R$, which completes our proof.
\end{proof}

\section{Proof of Theorem \ref{theorem1.1}}\label{Sec.3}

\subsection{Blow-up result} \label{Sec.3.1}
Let us assume that $u= u(t,x)$ is a global (in time) weak solution to \eqref{Equation_Main} in the sense of Definition \ref{Defn1_Weak}. At the first stage, let us define the following functional:
\begin{align*}
    P_R &= \int_0^\ity \sum_{x\in V} \mu(x)\Phi_R(t,x)|u(t,x)|^p dt.
\end{align*}
By replacing the test function $\Psi$ in the relation \eqref{Integral_00} by $\Phi_R$ we derive
\begin{align}
&P_R+ \varepsilon \sum_{x\in V} \mu(x) \big((u_0(x)+u_1(x)\big)\Phi_R(0,x) \\
&\qquad =\int_0^\ity  \sum_{x\in V} \mu(x) u(t,x) \partial_t^2 \Phi_R(t,x)dt - \int_0^\ity  \sum_{x\in V} \mu(x)\Delta u(t,x) \Phi_R(t,x)dt \notag \\
&\hspace{5cm}- \int_0^\ity  \sum_{x\in V} \mu(x) u(t,x)\partial_t\Phi_R(t,x)dt \notag\\
&\qquad =: P_{1,R} + P_{2,R} + P_{3,R}, \label{Estimate-00}
\end{align}
where we noticed that $\partial_t\Phi_R(0,x)=0$. Employing Lemma \ref{lemma2.2} and H\"{o}lder's inequality with $\frac{1}{p}+\frac{1}{p'}=1$ we can proceed as follows:
\begin{align}
|P_{1,R}| &\lesssim R^{-\frac{8}{\alpha+2}} \int_0^\ity  \sum_{x\in V} \mu(x) |u(t,x)| \big(\Phi_R^*(t,x)\big)^{\frac{\beta}{\beta+2}} dt \notag \\
&\lesssim R^{-\frac{8}{\alpha+2}}\left( \int_0^\ity  \sum_{x\in V} \mu(x) |u(t,x)|^p \big(\Phi_R^*(t,x)\big)^{\frac{p\beta}{\beta+2}} dt\right)^{\frac{1}{p}} \left( \int_0^\ity \sum_{x\in V, {\rm supp}\,\Phi^*_R} \mu(x) dt\right)^{\frac{1}{p'}} \label{Estimate-01}
\end{align}
and
\begin{align}
|P_{3,R}| &\lesssim R^{-\frac{4}{\alpha+2}} \int_0^\ity  \sum_{x\in V} \mu(x) |u(t,x)| \big(\Phi_R^*(t,x)\big)^{\frac{\beta+1}{\beta+2}} dt \notag \\
&\lesssim R^{-\frac{4}{\alpha+2}}\left( \int_0^\ity  \sum_{x\in V} \mu(x) |u(t,x)|^p \big(\Phi_R^*(t,x)\big)^{\frac{p(\beta+1)}{\beta+2}} dt\right)^{\frac{1}{p}} \left( \int_0^\ity  \sum_{x\in V, {\rm supp}\,\Phi^*_R} \mu(x) dt\right)^{\frac{1}{p'}}. \label{Estimate-02}
\end{align}
Now we use the formula of integration by parts from Lemma \ref{Inte-by-parts.Lemma} to estimate $P_{2,R}$ in the way
$$ P_{2,R}= \int_0^\ity  \sum_{x\in V} \mu(x) u(t,x) \Delta\Phi_R(t,x)dt. $$
Again, the application of Lemma \ref{lemma2.2} and H\"{o}lder's inequality with $\frac{1}{p}+\frac{1}{p'}=1$ leads to
\begin{align}
|P_{2,R}| &\lesssim R^{-(1+\nu)} \int_0^\ity  \sum_{x\in V} \mu(x) |u(t,x)| \big(\Phi_R^*(t,x)\big)^{\frac{\beta+1}{\beta+2}} dt \notag \\
&\lesssim R^{-(1+\nu)}\left( \int_0^\ity  \sum_{x\in V} \mu(x) |u(t,x)|^p \big(\Phi_R^*(t,x)\big)^{\frac{p(\beta+1)}{\beta+2}} dt\right)^{\frac{1}{p}} \left( \int_0^\ity  \sum_{x\in V, {\rm supp}\,\Phi^*_R} \mu(x) dt\right)^{\frac{1}{p'}}. \label{Estimate-03}
\end{align}
Combining the estimates from \eqref{Estimate-00} to \eqref{Estimate-03} one arrives at
\begin{align*}
    &P_R+ \varepsilon \sum_{x\in V} \mu(x) \big((u_0(x)+u_1(x)\big)\Phi_R(0,x) \\
    &\qquad\lesssim R^{-\min\{1+\nu,\frac{4}{\alpha+2}\}}\left( \int_0^\ity  \sum_{x\in V} \mu(x) |u(t,x)|^p \big(\Phi_R^*(t,x)\big)^{\frac{p\beta}{\beta+2}} dt\right)^{\frac{1}{p}} \left( \int_0^\ity  \sum_{x\in V, {\rm supp}\,\Phi^*_R} \mu(x) dt\right)^{\frac{1}{p'}}. 
\end{align*}
Now let us choose
$$ \alpha = \frac{2(1-\nu)}{1+\nu}, \ \ \text{i.e.}\ \ 1+\nu = \frac{4}{\alpha+2} $$
and
$$ \beta\geqslant \frac{2}{p-1}, \ \ \text{i.e.}\ \  \frac{p\beta}{\beta +2}\geqslant 1, $$
which give the following estimate:
\begin{align}
    &P_R+ \varepsilon \sum_{x\in V} \mu(x) \big((u_0(x)+u_1(x)\big)\Phi_R(0,x) \notag \\
    &\qquad\lesssim R^{-(1+\nu)}\left( \int_0^\ity  \sum_{x\in V} \mu(x) |u(t,x)|^p \Phi_R^*(t,x) dt\right)^{\frac{1}{p}} \left( \int_0^\ity  \sum_{x\in V, {\rm supp}\,\Phi^*_R} \mu(x) dt\right)^{\frac{1}{p'}}. \label{Estimate-04}
\end{align}
Due to the assumption \eqref{Assumption1_Initial_data}, there exists a sufficiently large constant $R_1>0$ so that one concludes
\begin{equation*}
\sum_{x\in V} \mu(x) \big((u_0(x)+u_1(x)\big)\Phi_R(0,x) >0
\end{equation*}
for any $R>R_1$. Thus, it follows from \eqref{Estimate-04} that
$$ P_R \lesssim R^{-(1+\nu)}P_R^{\frac{1}{p}} \left( \int_0^\ity  \sum_{x\in V, {\rm supp}\,\Phi^*_R} \mu(x) dt\right)^{\frac{1}{p'}} $$
due to the fact $\Phi_R^*(t,x) \le \Phi_R(t,x)$ for any $(t,x) \in [0,\ity) \times V$. As a consequence, from the last estimate we obtain
\begin{align}
P_R^{1-\frac{1}{p}} &\lesssim R^{-(1+\nu)} \left( \int_0^\ity  \sum_{x\in V, {\rm supp}\,\Phi^*_R} \mu(x) dt\right)^{1-\frac{1}{p}} \label{Estimate-05}.
\end{align} 
Then, the condition \eqref{Critical1_Condition} becomes
$$ {\rm Vol}(B(x_0,R))\lesssim R^{\frac{1+\nu}{p-1}}, $$
which gives
\begin{equation} \label{Estimate-06}
    \int_0^\ity  \sum_{x\in V, {\rm supp}\,\Phi^*_R} \mu(x) dt \lesssim R^{\frac{4}{\alpha+2}}\,{\rm Vol}(B(x_0,R)) \lesssim R^{(1+\nu)\frac{p}{p-1}}.
\end{equation}
Linking \eqref{Estimate-05} and \eqref{Estimate-06} we achieve
$$ P_R^{1-\frac{1}{p}} \lesssim 1,\quad \text{i.e.}\quad P_R \lesssim 1. $$
For this reason, there exists a sufficiently large constant $R_2$ such that
$$ P_R= \int_0^\ity  \sum_{x\in V} \mu(x)\Phi_R(t,x)|u(t,x)|^p dt \le C $$
for any $R>R_2$. Subsequently, it yields immediately
$$ \int_0^\ity  \sum_{x\in V} \mu(x) |u(t,x)|^p \Phi_R^*(t,x) dt \to 0 $$
as $R\to \ity$. Again, the combination of \eqref{Estimate-04} and \eqref{Estimate-06} leads to
\begin{equation}
   P_R+ \varepsilon \sum_{x\in V} \mu(x) \big((u_0(x)+u_1(x)\big)\Phi_R(0,x) \lesssim \left( \int_0^\ity  \sum_{x\in V} \mu(x) |u(t,x)|^p \Phi_R^*(t,x) dt\right)^{\frac{1}{p}}. \label{Estimate-07} 
\end{equation}
Passing $R\to \ity$ in \eqref{Estimate-07} we derive
$$ \sum_{x\in V} \mu(x) \big((u_0(x)+u_1(x)\big) = 0, $$
which is a contradiction to the assumption \eqref{Assumption1_Initial_data}. Hence, our proof is completed.

\subsection{Lifespan estimates} \label{Sec.3.2}
Assume that $u= u(t,x)$ is a local (in time) solution to \eqref{Equation_Main} in $[0,T_\e)\times V$ in the sense of Definition \ref{Defn1_Weak}, where $T_\e$ stands for the lifespan of solutions in the sense of Definition \ref{Defn_Lifespan}. Let us divide the proof of Theorem \ref{theorem1.1} into two cases as follows:
\begin{itemize}[leftmargin=*]
\item[$\bullet$] \textbf{Case 1:} If
$$ {\rm Vol}(B(x_0,R))= o\Big(R^{\frac{1+\nu}{p-1}}\Big), $$
then we repeat some steps in Section \ref{Sec.3.1} to get the following estimate:
$$ P_R+ c\e \le C\, \Big({\rm Vol}(B(x_0,R))\Big)^{\frac{1}{p'}}\, R^{-\frac{1+\nu}{p}}P_R^{\frac{1}{p}}, $$
that is,
\begin{equation} \label{LS_1}
c\e \le C\, \Big({\rm Vol}(B(x_0,R))\Big)^{\frac{1}{p'}}\, R^{-\frac{1+\nu}{p}}P_R^{\frac{1}{p}}- P_R,
\end{equation}
where $c$ is a suitable constant fulfilling
$$ \sum_{x\in V} \mu(x) \big((u_0(x)+u_1(x)\big)\Phi_R(0,x)> c> 0 $$
for any $R> R_1$. After applying the following elementary inequality:
$$ A\,y^\gamma- y \le A^{\frac{1}{1-\gamma}} \quad \text{ for any } A>0,\, y \ge 0 \text{ and } 0< \gamma< 1, $$
to (\ref{LS_1}), one conclude that
$$ \e \le C\,R^{-\frac{1+\nu}{p-1}}{\rm Vol}(B(x_0,R)). $$
From this, letting $R \to T_\e^{\frac{\alpha+2}{4}}$ in the previous estimate we may arrive at
$$ \frac{T_\e}{\Big({\rm Vol}(B(x_0,T_\e^{\frac{1}{1+\nu}}))\Big)^{p-1}} \le C\varepsilon^{-(p-1)}, $$
which is the first desired estimate in \eqref{Lifespan11}.
\item[$\bullet$] \textbf{Case 2:} If
$$ {\rm Vol}(B(x_0,R)) \sim R^{\frac{1+\nu}{p-1}}, $$
then we recall the estimate \eqref{Estimate-07} to achieve
\begin{equation} \label{LS_9}
    P_R+ \varepsilon \sum_{x\in V} \mu(x) \big((u_0(x)+u_1(x)\big)\Phi_R(0,x) \lesssim \left( \int_0^T  \sum_{x\in V} \mu(x) |u(t,x)|^p \Phi_R^*(t,x) dt\right)^{\frac{1}{p}}.
\end{equation}
What's more, let us introduce the following auxiliary functions:
$$ h(r):= \int_0^T  \sum_{x\in V} \mu(x) |u(t,x)|^p \Phi_r^*(t,x) dt\quad \text{ with }r\in (0,\ity) $$
and
$$ \mathcal{H}(R):= \int_0^R h(r)r^{-1}dr.$$
Carrying out the change of variable $s=\dfrac{t^{\alpha+2} + d(x_0,x)^4}{r^4}$ one achieves
\begin{align}
\mathcal{H}(R)&=\int_0^R \left(\int_0^T \sum_{x\in V} \mu(x) |u(t,x)|^p \left(\phi^*\left(\frac{t^{\alpha+2} +d(x_0,x)^4}{r^4}\right)\right)^{\beta+2} dt\right)\,r^{-1}dr \nonumber \\
&= \f{1}{4}\int_0^{T} \sum_{x\in V} \mu(x) |u(t,x)|^p \int_{\frac{t^{\alpha+2} + d(x_0,x)^4}{R^4}}^{\infty}\big(\phi^*(s)\big)^{\beta+2}s^{-1}dsdt \nonumber \\
&\leqslant \f{1}{4}\int_0^{T}\sum_{x\in V} \mu(x) |u(t,x)|^p\bigg(\int_{1/2}^1 \big(\phi^*(s)\big)^{\beta+2}s^{-1}ds\bigg)dt\qquad ({\rm supp}\,\phi^* \subset [1/2,1])\nonumber \\
& = \f{1}{4}\int_0^{T}\sum_{x\in V} \mu(x) |u(t,x)|^p\bigg(\int_{1/2}^1 \big(\phi(s)\big)^{\beta+2}s^{-1}ds\bigg)dt\qquad (\phi^* \equiv \phi \text{ in } [1/2,1]) \nonumber \\
&\le \f{1}{4}\int_0^{T}\sum_{x\in V} \mu(x) |u(t,x)|^p \sup_{r \in (0,R)}\left(\phi\left(\frac{t^{\alpha+2} +d(x_0,x)^4}{r^4}\right)\right)^{\beta+2}\bigg(\int_{1/2}^1 s^{-1}\,ds\bigg)\,dt \nonumber \\
&\le \f{\log 2}{4} \int_0^{T}\sum_{x\in V} \mu(x) |u(t,x)|^p \left(\phi\left(\frac{t^{\alpha+2} +d(x_0,x)^4}{R^4}\right)\right)^{\beta+2}\,dt \qquad (\phi \text{ is decreasing}) \nonumber \\
& = \f{\log 2}{4}P_R. \label{LS_10}
\end{align}
Moreover, it is clear to realize that
\begin{equation}
\mathcal{H}'(R)R= h(R)= \int_0^{T}\sum_{x\in V} \mu(x) |u(t,x)|^p \Phi_R^*(t,x)dt. \label{LS_11}
\end{equation}
Linking all estimates from \eqref{LS_9} to \eqref{LS_11} we have established the following estimate:
$$ c\e+ \f{4\mathcal{H}(R)}{\log 2} \le c\e+ P_R \lesssim \big(\mathcal{H}'(R)R\big)^{\frac{1}{p}}, $$
that is,
$$ R^{-1}\left(c\e+ \f{4\mathcal{H}(R)}{\log 2}\right)^p \le C\,\mathcal{H}'(R). $$
Hence, one deduces
\begin{equation} \label{LS_12}
R^{-1}\big(\mathcal{H}(R)\big)^p \le C\left(\f{\log 2}{4}\right)^p \mathcal{H}'(R),\quad \text{i.e.}\quad R^{-1}\le C\left(\f{\log 2}{4}\right)^p \f{\mathcal{H}'(R)}{\big(\mathcal{H}(R)\big)^p}
\end{equation}
and
\begin{equation} \label{LS_13}
c^p\e^p R^{-1} \le C\,\mathcal{H}'(R).
\end{equation}
By setting $r:=R$ and considering $R\geqslant R_1^2$, we take integration of two sides of \eqref{LS_12} over $[\sqrt{R},R]$ to derive
\begin{equation} \label{LS_14}
\f{1}{2}\log R \le -\f{C(\log 2)^p}{(p-1)4^p}\big(\mathcal{H}(r)\big)^{-(p-1)}\Big|_{r=\sqrt{R}}^{r=R} \le \f{C(\log 2)^p}{(p-1)4^p}\big(\mathcal{H}(\sqrt{R})\big)^{-(p-1)}.
\end{equation}
Putting again $r:=R$ and integrating \eqref{LS_13} over $[R_0,\sqrt{R}]$ we have
$$ c^p\e^p\Big(\log\sqrt{R}- \log R_0\Big) \le C\,\Big(\mathcal{H}(\sqrt{R})-\mathcal{H}(R_0)\Big), $$
which gives
$$ \f{1}{4}c^p\e^p\log R \le C\,\mathcal{H}(\sqrt{R}). $$
In the last step, we replace the previous inequality in \eqref{LS_14} to get the estimate
$$ \f{1}{2}\log R \le \f{C(\log 2)^p}{(p-1)4^p}\left(\f{1}{4C}c^p\e^p\log R\right)^{-(p-1)}, $$
which is equivalent to
$$ \log R \le \f{C\log 2}{c^{p-1}}\left(\f{1}{2(p-1)}\right)^{\frac{1}{p}}\e^{-(p-1)}. $$
Therefore, passing $R \to T_\e^{\frac{\alpha+2}{4}}$ in the last estimate we may arrive at
$$ T_\e \le \exp\left(\f{(1+\nu)C\log 2}{c^{p-1}}\left(\f{1}{2(p-1)}\right)^{\frac{1}{p}}\e^{-(p-1)}\right), $$
which is the estimate in \eqref{Lifespan12}.
\end{itemize}
Summarizing, the proof of lifespan estimates in Theorem \ref{theorem1.1} is demonstrated.

\section{Proof of Theorem \ref{theorem1.2}} \label{Sec.4}

\subsection{Blow-up result} \label{Sec.4.1}
Suppose that $(u,v)= (u(t,x),v(t,x))$ is a global (in time) weak solution to \eqref{System_Main} in the sense of Definition \ref{Defn2_Weak}. First of all, let us define the following two functionals:
\begin{align*}
    I_R &= \int_0^\ity \sum_{x\in V} \mu(x)\Phi_R(t,x)|v(t,x)|^p dt, \\
    J_R &= \int_0^\ity \sum_{x\in V} \mu(x)\Phi_R(t,x)|u(t,x)|^q dt.
\end{align*}
Plugging the test function $\Psi_1$ in the relation \eqref{Integral_01} by $\Phi_R$ one obtains
\begin{align}
&I_R+ \varepsilon \sum_{x\in V} \mu(x) \big((u_0(x)+u_1(x)\big)\Phi_R(0,x) \\
&\qquad =\int_0^\ity \sum_{x\in V} \mu(x) u(t,x) \partial_t^2 \Phi_R(t,x)dt - \int_0^\ity \sum_{x\in V} \mu(x)\Delta u(t,x) \Phi_R(t,x)dt \notag \\
&\hspace{5cm}- \int_0^\ity \sum_{x\in V} \mu(x) u(t,x)\partial_t\Phi_R(t,x)dt \notag\\
&\qquad =: J_{1,R} + J_{2,R} + J_{3,R}, \label{Estimate-0}
\end{align}
thanks to the relation $\partial_t\Phi_R(0,x)=0$. The application of Lemma \ref{lemma2.2} and H\"{o}lder's inequality with $\frac{1}{q}+\frac{1}{q'}=1$ gives
\begin{align}
|J_{1,R}| &\lesssim R^{-\frac{8}{\alpha+2}} \int_0^\ity \sum_{x\in V} \mu(x) |u(t,x)| \big(\Phi_R^*(t,x)\big)^{\frac{\beta}{\beta+2}} dt \notag \\
&\lesssim R^{-\frac{8}{\alpha+2}}\left( \int_0^\ity \sum_{x\in V} \mu(x) |u(t,x)|^q \big(\Phi_R^*(t,x)\big)^{\frac{q\beta}{\beta+2}} dt\right)^{\frac{1}{q}} \left( \int_0^\ity \sum_{x\in V, {\rm supp}\,\Phi^*_R} \mu(x) dt\right)^{\frac{1}{q'}} \label{Estimate-1}
\end{align}
and
\begin{align}
|J_{3,R}| &\lesssim R^{-\frac{4}{\alpha+2}} \int_0^\ity \sum_{x\in V} \mu(x) |u(t,x)| \big(\Phi_R^*(t,x)\big)^{\frac{\beta+1}{\beta+2}} dt \notag \\
&\lesssim R^{-\frac{4}{\alpha+2}}\left( \int_0^\ity \sum_{x\in V} \mu(x) |u(t,x)|^q \big(\Phi_R^*(t,x)\big)^{\frac{q(\beta+1)}{\beta+2}} dt\right)^{\frac{1}{q}} \left( \int_0^\ity \sum_{x\in V, {\rm supp}\,\Phi^*_R} \mu(x) dt\right)^{\frac{1}{q'}}. \label{Estimate-2}
\end{align}
By the aid of the formula of integration by parts from Lemma \ref{Inte-by-parts.Lemma} we can proceed $J_{2,R}$ as follows:
$$ J_{2,R}= \int_0^T \sum_{x\in V} \mu(x) u(t,x) \Delta\Phi_R(t,x)dt. $$
After employing Lemma \ref{lemma2.2} and H\"{o}lder's inequality with $\frac{1}{q}+\frac{1}{q'}=1$ again, one has
\begin{align}
|J_{2,R}| &\lesssim R^{-(1+\nu)} \int_0^\ity \sum_{x\in V} \mu(x) |u(t,x)| \big(\Phi_R^*(t,x)\big)^{\frac{\beta+1}{\beta+2}} dt \notag \\
&\lesssim R^{-(1+\nu)}\left( \int_0^\ity \sum_{x\in V} \mu(x) |u(t,x)|^q \big(\Phi_R^*(t,x)\big)^{\frac{q(\beta+1)}{\beta+2}} dt\right)^{\frac{1}{q}} \left( \int_0^\ity \sum_{x\in V, {\rm supp}\,\Phi^*_R} \mu(x) dt\right)^{\frac{1}{q'}}. \label{Estimate-3}
\end{align}
For this reason, we combine the estimates from \eqref{Estimate-0} to \eqref{Estimate-3} to achieve
\begin{align*}
    &I_R+ \varepsilon \sum_{x\in V} \mu(x) \big((u_0(x)+u_1(x)\big)\Phi_R(0,x) \\
    &\qquad\lesssim R^{-\min\{1+\nu,\frac{4}{\alpha+2}\}}\left( \int_0^\ity \sum_{x\in V} \mu(x) |u(t,x)|^q \big(\Phi_R^*(t,x)\big)^{\frac{q\beta}{\beta+2}} dt\right)^{\frac{1}{q}} \left( \int_0^\ity \sum_{x\in V, {\rm supp}\,\Phi^*_R} \mu(x) dt\right)^{\frac{1}{q'}}. 
\end{align*}
Repeating the same procedure as we did above, from the relation \eqref{Integral_02} we also derive
\begin{align*}
    &J_R+ \varepsilon \sum_{x\in V} \mu(x) \big((v_0(x)+v_1(x)\big)\Phi_R(0,x) \\
    &\qquad\lesssim R^{-\min\{1+\nu,\frac{4}{\alpha+2}\}}\left( \int_0^\ity \sum_{x\in V} \mu(x) |v(t,x)|^p \big(\Phi_R^*(t,x)\big)^{\frac{p\beta}{\beta+2}} dt\right)^{\frac{1}{p}} \left( \int_0^\ity \sum_{x\in V, {\rm supp}\,\Phi^*_R} \mu(x) dt\right)^{\frac{1}{p'}}. 
\end{align*}
Now let us choose
$$ \alpha = \frac{2(1-\nu)}{1+\nu}, \ \ \text{i.e.}\ \ 1+\nu = \frac{4}{\alpha+2} $$
and
$$ \beta\geqslant \max \left\{ \frac{2}{p-1},\frac{2}{q-1} \right\}=\frac{2}{\min\{p,q\}-1}, \ \ \text{i.e.}\ \  \frac{\min\{p,q\}\,\beta}{\beta +2}\geqslant 1, $$
which lead to the following estimates:
\begin{align}
    &I_R+ \varepsilon \sum_{x\in V} \mu(x) \big((u_0(x)+u_1(x)\big)\Phi_R(0,x) \notag \\
    &\qquad\lesssim R^{-(1+\nu)}\left( \int_0^\ity \sum_{x\in V} \mu(x) |u(t,x)|^q \Phi_R^*(t,x) dt\right)^{\frac{1}{q}} \left( \int_0^\ity \sum_{x\in V, {\rm supp}\,\Phi^*_R} \mu(x) dt\right)^{\frac{1}{q'}} \label{Estimate-4}
\end{align}
and
\begin{align}
    &J_R+ \varepsilon \sum_{x\in V} \mu(x) \big((v_0(x)+v_1(x)\big)\Phi_R(0,x) \notag \\
    &\qquad\lesssim R^{-(1+\nu)}\left( \int_0^\ity \sum_{x\in V} \mu(x) |v(t,x)|^p \Phi_R^*(t,x) dt\right)^{\frac{1}{p}} \left( \int_0^\ity \sum_{x\in V, {\rm supp}\,\Phi^*_R} \mu(x) dt\right)^{\frac{1}{p'}} \label{Estimate-5}
\end{align}
Because of the assumption \eqref{Assumption2_Initial_data}, there exists a sufficiently large constant $R_3>0$ such that they hold
\begin{equation} \label{Estimate-6}
\sum_{x\in V} \mu(x) \big((u_0(x)+u_1(x)\big)\Phi_R(0,x) >0\quad \text{and}\quad \sum_{x\in V} \mu(x) \big((v_0(x)+v_1(x)\big)\Phi_R(0,x) >0
\end{equation}
for any $R>R_3$. Consequently, one finds from \eqref{Estimate-4} to \eqref{Estimate-6} that
\begin{align*}
    I_R \lesssim R^{-(1+\nu)}J_R^{\frac{1}{q}} \left( \int_0^\ity \sum_{x\in V, {\rm supp}\,\Phi^*_R} \mu(x) dt\right)^{\frac{1}{q'}}, \\
    J_R \lesssim R^{-(1+\nu)}I_R^{\frac{1}{p}} \left( \int_0^\ity \sum_{x\in V, {\rm supp}\,\Phi^*_R} \mu(x) dt\right)^{\frac{1}{p'}},
\end{align*}
since $\Phi_R^*(t,x) \le \Phi_R(t,x)$ for any $(t,x) \in [0,\ity) \times V$. For this reason, it follows immediately
\begin{align}
I_R^{1-\frac{1}{pq}} &\lesssim R^{-(1+\nu)(1+\frac{1}{q})} \left( \int_0^\ity \sum_{x\in V, {\rm supp}\,\Phi^*_R} \mu(x) dt\right)^{1-\frac{1}{pq}}, \nonumber \\ 
J_R^{1-\frac{1}{pq}} &\lesssim R^{-(1+\nu)(1+\frac{1}{p})} \left( \int_0^\ity \sum_{x\in V, {\rm supp}\,\Phi^*_R} \mu(x) dt\right)^{1-\frac{1}{pq}} \label{Estimate-7}.
\end{align} 
Without loss of generality let us assume $p\le q$. Then, the assumption \eqref{Critical2_Condition} becomes
$$ {\rm Vol}(B(x_0,R))\lesssim R^{(1+\nu)\frac{1+q}{pq-1}}, $$
which yields
\begin{equation} \label{Estimate-8}
    \int_0^\ity \sum_{x\in V, {\rm supp}\,\Phi^*_R} \mu(x) dt \lesssim R^{\frac{4}{\alpha+2}}\,{\rm Vol}(B(x_0,R)) \lesssim R^{(1+\nu)\frac{q+pq}{pq-1}}.
\end{equation}
Combining \eqref{Estimate-7} and \eqref{Estimate-8} one gets
$$ J_R^{1-\frac{1}{pq}} \lesssim 1,\quad \text{i.e.}\quad J_R \lesssim 1. $$
This means that there exists a sufficiently large constant $R_4$ such that it holds
$$ J_R= \int_0^\ity \sum_{x\in V} \mu(x)\Phi_R(t,x)|u(t,x)|^q dt \le C $$
for any $R>R_4$. As a result, we may arrive at
$$ \int_0^\ity \sum_{x\in V} \mu(x) |u(t,x)|^q \Phi_R^*(t,x) dt \to 0 $$
as $R\to \ity$. After linking \eqref{Estimate-4}, \eqref{Estimate-5} and \eqref{Estimate-8} we derive
$$ J_R+ \varepsilon \sum_{x\in V} \mu(x) \big((v_0(x)+v_1(x)\big)\Phi_R(0,x) \lesssim \left( \int_0^\ity \sum_{x\in V} \mu(x) |u(t,x)|^q \Phi_R^*(t,x) dt\right)^{\frac{1}{pq}} $$
From the last two estimates, we let $R\to \ity$ to conclude
$$ \sum_{x\in V} \mu(x) \big((v_0(x)+v_1(x)\big) = 0, $$
which is a contradiction to the assumption \eqref{Assumption2_Initial_data}. All in all, the proof of Theorem \ref{theorem1.1} is complete.

\subsection{Lifespan estimates} \label{Sec.4.2}
Let us suppose that $(u,v)= (u(t,x),v(t,x))$ is a local (in time) solution to \eqref{System_Main} in $[0,T_\e)\times V$ in the sense of Definition \ref{Defn2_Weak}, where $T_\e$ stands for the lifespan of solutions in the sense of Definition \ref{Defn_Lifespan}. Now, to prove Theorem \ref{theorem1.2} let us separate our consideration into two cases as follows:
\begin{itemize}[leftmargin=*]
\item[$\bullet$] \textbf{Case 1:} If
$$ {\rm Vol}(B(x_0,R))= o\Big(R^{(1+\nu)\Gamma(p,q)}\Big), $$
then repeating some steps in Section \ref{Sec.4.1} we obtain the following estimate:
$$ J_R+ c\e \le C\, \Big({\rm Vol}(B(x_0,R))\Big)^{1-\frac{1}{pq}}\, R^{-(1+\nu)\frac{1+q}{pq}}J_R^{\frac{1}{pq}}, $$
which is equivalent to
\begin{equation} \label{LS_1}
c\e \le C\, \Big({\rm Vol}(B(x_0,R))\Big)^{1-\frac{1}{pq}}\, R^{-(1+\nu)\frac{1+q}{pq}}J_R^{\frac{1}{pq}}- J_R,
\end{equation}
where $c$ is a suitable constant satisfying
$$ \sum_{x\in V} \mu(x) \big((u_0(x)+u_1(x)\big)\Phi_R(0,x)> c> 0 $$
for any $R> R_3$. Using the elementary inequality
$$ A\,y^\gamma- y \le A^{\frac{1}{1-\gamma}} \quad \text{ for any } A>0,\, y \ge 0 \text{ and } 0< \gamma< 1, $$
to (\ref{LS_1}) we derive
$$ \e \le C\,R^{-(1+\nu)\frac{1+q}{pq-1}}{\rm Vol}(B(x_0,R)). $$
As a consequence, we let $R \to T_\e^{\frac{\alpha+2}{4}}$ in the previous estimate to achieve
$$ \frac{T_\e}{\Big({\rm Vol}(B(x_0,T_\e^{\frac{1}{1+\nu}}))\Big)^{\frac{1}{\Gamma(p,q)}}} \leqslant C\e^{-{\frac{1}{\Gamma(p,q)}}}, $$
which is the first desired estimate in \eqref{Lifespan21}.
\item[$\bullet$] \textbf{Case 2:} If
$$ {\rm Vol}(B(x_0,R))\sim R^{(1+\nu)\Gamma(p,q)}, $$
then recalling the estimates to achieve \eqref{Estimate-4} and \eqref{Estimate-5} one finds
\begin{align*}
    &I_R+ \varepsilon \sum_{x\in V} \mu(x) \big((u_0(x)+u_1(x)\big)\Phi_R(0,x) \\
    &\qquad \lesssim R^{(1+\nu)\frac{(q-1)\Gamma(p,q)-1}{q}}\left( \int_0^\ity \sum_{x\in V} \mu(x) |u(t,x)|^q \Phi_R^*(t,x) dt\right)^{\frac{1}{q}}
\end{align*}
and
\begin{align*}
    &J_R+ \varepsilon \sum_{x\in V} \mu(x) \big((v_0(x)+v_1(x)\big)\Phi_R(0,x) \\
    &\qquad \lesssim R^{(1+\nu)\frac{(p-1)\Gamma(p,q)-1}{p}}\left( \int_0^\ity \sum_{x\in V} \mu(x) |v(t,x)|^p \Phi_R^*(t,x) dt\right)^{\frac{1}{p}}.
\end{align*}
For $r\in (0,\ity)$, let us introduce the following auxiliary functions:
\begin{align*}
    g_p(r) &:= \int_0^T  \sum_{x\in V} \mu(x) |u(t,x)|^p \Phi_r^*(t,x) dt \quad \text{ and }\mathcal{G}_p(R) = \int_0^R g_p(r)r^{-1}dr, \\
     g_q(r) &:= \int_0^T  \sum_{x\in V} \mu(x) |v(t,x)|^q \Phi_r^*(t,x) dt\quad \text{ and }\mathcal{G}_q(R) = \int_0^R g_q(r)r^{-1}dr.
\end{align*}
Then, repeating some arguments to derive \eqref{LS_10} we may arrive at some estimates as follows:
$$ \mathcal{G}_p(R) \le \f{\log 2}{4} I_R \quad \text{ and }\quad \mathcal{G}_q(R) \le \f{\log 2}{4} J_R. $$
Combining these estimates with the relations
$$ g_p(R) = R\mathcal{G}'_p(R)\quad \text{ and }\quad g_q(R) = R\mathcal{G}'_q(R),$$
one obtains
\begin{align*}
    c\e+ \f{4G_q(R)}{\log 2} \le c\e+ J_R &\lesssim R^{(1+\nu)\frac{(p-1)\Gamma(p,q)-1}{p}}\big(R\mathcal{G}'_p(R)\big)^{\frac{1}{p}}, \\
    c\e+ \f{4G_p(R)}{\log 2} \le c\e+ I_R &\lesssim R^{(1+\nu)\frac{(q-1)\Gamma(p,q)-1}{q}}\big(R\mathcal{G}'_q(R)\big)^{\frac{1}{q}},
\end{align*}
which give
\begin{align*}
    \mathcal{G}'_p(R) &\ge C_1 R^{\nu-(1+\nu)(p-1)\Gamma(p,q)}\big(c\e + G_q(R)\big)^p, \\
    \mathcal{G}'_q(R) &\ge C_1 R^{\nu-(1+\nu)(q-1)\Gamma(p,q)}\big(c\e + G_p(R)\big)^q.
\end{align*}
What's more, to control the above system of nonlinear differential inequalities, motivated by the recent paper \cite{ChenDao2021} we can follow the same approach used in this paper to conclude all estimates for lifespan of solutions in \eqref{Lifespan21}.
\end{itemize}
Summarizing, the proof of Theorem \ref{theorem1.2} in complete.

\section{Concluding remarks}\label{Sec.5}

\begin{remark}[\textbf{Wave equations with double damping}]
\fontshape{n}
\selectfont
Throughout in this paper, we have succeeded in obtaining the upper bounds of lifespan estimates for the Cauchy problems \eqref{Equation_Main} and \eqref{System_Main} for nonlinear damped wave equations on a weighted graph $G=(V,E)$. We want to underline that the approach used can be also applicable to catch some estimates for lifespan of solutions to the Cauchy problem for nonlinear wave equations with double damping as follows:
\begin{equation*}
\begin{cases}
u_{tt}(t,x)- \Delta u(t,x)+ u_t(t,x)- \Delta u_t(t,x)= |u(t,x)|^p, &(t,x) \in (0,T) \times V, \\
u(0,x)= \e u_0(x),\quad u_t(0,x)= \e u_1(x), &\quad x\in V,
\end{cases}
\end{equation*}
and its weakly couple system, or even for nonlinear wave equations with parabolic-like damping. This expectation is reasonable since our estimates in this paper exactly coincide with those from \cite{HuWang-2024} in which the authors have dealt with a semi-linear parabolic equation.
\end{remark}

\begin{remark}[\textbf{An open problem}]
\fontshape{n}
\selectfont
An interesting question, which should be proposed here, to give a positive answer is whether one may prove a result for global (in time) existence of solutions to \eqref{Equation_Main} and \eqref{System_Main} or not. The main difficulty comes from the lack of estimates for solutions to the corresponding linear equations as they appear in the setting of Euclidean space. Furthermore, the technique of estimating heat kernels (see, for example, \cite{LinWu-2017,Wu-2018,Wu-2021,LinWu-2018}) does not work well in the situation of damped wave equations. For this reason, we can say that it remains an open problem. 
\end{remark}

\section*{Acknowledgments}
This work of Tuan Anh Dao is supported by Vietnam Ministry of Education and Training and Vietnam Institute for Advanced Study in Mathematics under grant number B2026-CTT-04.


\end{document}